\newcommand{\Ind}{1\!\mathrm{l}}
\newcommand{\iy}{\infty}
\renewcommand{\th}{\theta}
\numberwithin{equation}{section}
\begin{document}

\title{Adaptive Bayesian procedures using random series priors}
\author{Weining Shen \and Subhashis Ghosal}
\date{}
\maketitle

\begin{center}
\textbf{Abstract}
\end{center}
We consider a general class of prior distributions for nonparametric Bayesian estimation which uses finite random series with a random number of terms. A prior is constructed through distributions on the number of basis functions and the associated coefficients. We derive a general result on adaptive posterior contraction rates for all smoothness levels of the target  function in the true model by constructing an appropriate ``sieve'' and applying the general theory of posterior contraction rates. We apply this general result on several statistical problems such as density estimation, various nonparametric regressions, classification, spectral density estimation, functional regression etc. The prior can be viewed as an alternative to the commonly used Gaussian process prior, but properties of the posterior distribution can be analyzed by relatively simpler techniques. An interesting approximation property of B-spline basis expansion established in this paper allows a canonical choice of prior on coefficients in a random series and allows a simple computational approach without using Markov chain Monte-Carlo (MCMC) methods. A simulation study is conducted to show that the accuracy of the Bayesian estimators based on the random series prior and the Gaussian process prior are comparable. We apply the method on Tecator data using functional regression models.

\vspace*{.1in}

\noindent\textsc{Keywords}: {B-splines, Gaussian process, MCMC-free computation, nonparametric Bayes, posterior contraction rate, random series prior, rate adaptation.}

\newpage

\newtheorem{theorem}{Theorem}
\newtheorem{lemma}{Lemma}
\newtheorem{remark}{Remark}
\newtheorem{definition}{Definition}
\newtheorem{example}{Example}
\newtheorem{corollary}{Corollary}
\newtheorem{ex}{}
\newtheorem{proposition}{Proposition}

\section{Introduction}\label{sec1}
Bayesian methods have been widely used in the nonparametric statistical literature. Contraction rates of posterior distributions were studied in \citet{Ghosal2000}, \citet{Shen2001}, \citet{Ghosal20071,Ghosal2007} and \citet{Vander2008}. The optimal contraction rate of estimating a univariate $\alpha$-smooth function is typically  $n^{-\alpha/(2\alpha+1)}$, where $n$ is the sample size. Since the smoothness parameter $\alpha$ is usually unknown in practice, it is then of interest to investigate if a prior leads to optimal posterior contraction rates simultaneously for all values of $\alpha$, possibly up to a logarithmic factor. If that holds, a procedure is called rate-adaptive.

Bayesian rate adaptation results are important for at least two reasons. First, they guarantee maximum possible accuracy of the Bayesian estimation procedure within the given framework. Secondly, they assure that the same prior distribution can be used regardless of the smoothness of the underlying function being estimated. Bayesian adaptation results have been established for signal estimation by \citet{Belitser2003} and \citet{szabo2013}, for density estimation by \citet{Ghosal2002, Ghosal2008}, \citet{Scricciolo2006}, and \citet{Huang2004}, and for nonparametric regression by \citet{Huang2004} using discrete mixtures. Alternatively, \citet{Vander2009} constructed a prior based on a randomly rescaled Gaussian process, which automatically adapts for a continuous range of smoothness parameters.

Gaussian processes have been widely used for constructing prior distributions \citep{Lenk1988} and applications in spatial statistics \citep{Banerjee2008}. Posterior computational methods were developed in \citet{Choudhuri2007}, \citet{Rasmussen2006}, \citet{Tokdar2007} and \citet{Rue2009} among others. Posterior asymptotic properties, which are primarily driven by the structure of their reproducing kernel Hilbert space, were studied by \citet{Tokdar20071}, \citet{Ghosal2006}, \cite{Choi2007}, \citet{Vander2007,Vander2008,Vander2009}, \citet{Castillo2008,Castillo2012},  \citet{Castillo2014} and \citet{Bha2014}.

Besides a Gaussian process, another common prior on functions, obtained by putting a prior on the the number of terms and the corresponding coefficients of a series expansion, has been used extensively in applications \citep{Crainiceanu2005}. Study of posterior contraction rates for such finite random series priors have begun only recently. \citet{Rivoirard2012} considered univariate density estimation using an exponential link and wavelet or Fourier series basis;  \citet{dejonge2012} considered a general approach for multivariate function estimations using tensor-product spline basis and Gaussian distributions on the coefficients; \citet{Arbel2013}  proposed a class of sieve priors with general choice of basis functions and independent priors on the coefficients. A related work is \citet{Babenko2010}, who obtained oracle inequalities for posterior contraction for the infinite dimensional normal mean problem by putting a prior on the number of non-zero entries and then independent normal priors on the resulting components.

In the present paper, our contributions are two-fold. First, we obtain posterior contraction rates for finite random series priors for any curve estimation problems under both univariate and multivariate settings with arbitrary bases and arbitrary distributions on coefficients. Second, we show that for the B-splines basis and certain choices of priors on the coefficients, the posterior computation can be carried out by exploiting a conjugacy-like structure without using Markov chain Monte-Carlo (MCMC) techniques. Inevitably there are some overlap with \citet{Rivoirard2012}, \citet{dejonge2012} and \citet{Arbel2013}, but our goal is to emphasize the general properties of finite random series in all curve estimation problems and that the availability of conjugacy-like structures, which emerges only when one considers general prior distributions on the coefficients. We formulate one general theorem in an abstract setting suitable as a prelude for many different inference problems where we allow arbitrary basis functions and arbitrary multivariate distributions on the coefficients of the expansion. Thus the resulting process induced on the function need not be Gaussian, and can accommodate a variety of functions starting from one with a bounded support to one with a heavy tail. The resulting rate obtained in the abstract theorem depends on the smoothness of the underlying function, approximation ability of the basis expansion used, tail of the prior distribution on the coefficients, prior on the number of terms in the series expansion, prior concentration and the metrics being used. We compute the rates for various combinations of these choices.

It may be noted that Gaussian process and random series priors are intimately related in two ways --- a normal prior on the coefficients of a random series gives a Gaussian process while the Karhunen-Lo\`{e}ve expansion of a Gaussian process expresses itself as a random series with basis consisting of eigenfunctions of the covariance kernel of the Gaussian process.
Thus a random series prior may be regarded as a  flexible alternative to a Gaussian process prior. It is interesting to note that the theory of posterior contraction for Gaussian process priors established in \citet{Vander2007,Vander2008,Vander2009} use deep properties of Gaussian processes, while relatively elementary techniques lead to comparable posterior contraction rates for finite random series priors. Posterior computation for Gaussian process priors often need reversible jump MCMC procedures \citep{Tokdar2007} typically with a large number of knots  to approximate a given Gaussian process. For a random series prior based on B-spline expansion, for an appropriate prior on the coefficients, the conjugacy-like structure model can avoid the use of MCMC altogether by representing the posterior mean analytically, although the number of terms in the representation may be large. When the sample size $n$ is relatively small (e.g. $n=10$), the number of terms is manageable and the exact values of posterior moments can be computed. When the sample size is large, we sample a few terms and estimate the sum. The Monte Carlo standard error of the expression can be estimated, and is often fairly controlled provided the terms are similar to each other.

The paper is organized as follows. In Section \ref{sec3}, we present the main theorems of random series priors. In Sections \ref{sec5} and \ref{sec:reg}, we apply the theorems to a variety of statistical problems and derive the corresponding posterior contraction rates. Numerical results are presented in Section \ref{sec9}.

\section{General results}\label{sec3}
\subsection{Notations}
Let $\mathbb{N}= \{1,2,\ldots\}$, $\Delta_j = \{(x_1,\ldots,x_j): \sum_{i=1}^j x_i =1, x_1,\ldots,x_j \geq 0 \}$, and $\delta_x$ stand for the degenerate probability distribution at a point $x$. Let the indicator function of a set $A$ be denoted by $\Ind\{A\}$.
For an open region $\Omega_0$ in a Euclidean space, define the $\alpha$-H\"{o}lder class $\mathcal{C}^{\alpha}(\Omega_0)$ as the collection of functions $f$ on $\Omega_0$ that has bounded derivatives up to the order $\alpha_0$, which is the largest integer strictly smaller than $\alpha$, and the $\alpha_0$-th derivative of $f$ satisfies the H\"{o}lder condition
  $  |f^{(\alpha_0)} (x) - f^{(\alpha_0)}(y) | \leq C |x-y|^{\alpha-\alpha_0}$
  for some constant $C>0$ and any $x,y$ in the support of $f$.

We use ``$\lesssim$'' to denote an inequality up to a constant multiple, where the underlying constant of proportionality is universal. By $f \asymp g$, we mean $f \lesssim g \lesssim f$. The packing number $D(\epsilon,T,d)$ is defined as the maximum cardinality of a subset of $T$ whose elements are at least $\epsilon$-separated out with respect to a distance $d$. Let $h^2(p,q) = \int (\sqrt{p} - \sqrt{q} )^2 d\mu$, be the squared Hellinger distance, $K(p,q) = \int p \log (p/q) d\mu$, $V(p,q) = \int p  \log^2 (p/q)  d\mu$, be the Kullback-Leibler (KL) divergences and $\mathcal{K}(p,\epsilon) = \{f: K(p,f) \leq \epsilon^2, V(p,f) \leq \epsilon^2 \}$, be the KL neighborhood.
For a vector $\bm{\theta} \in \mathbb{R}^d$, define $\|\bm{\theta} \|_{p} = \{ \sum_{i=1}^d |\theta_i|^p \}^{1/p} $, $1\le p<\infty$, and $\|\bm{\theta} \|_{\infty} = \max_{1\leq i \leq d} |\theta_i|$. Similarly, we define $\| f \|_{p,G} = \{ \int |f(x)|^p dG \}^{1/p}$ and $\|f \|_{\infty} = \sup_x |f(x)|$ as the $L_p$-, $1 \leq p < \infty$, and $L_{\infty}$-norms of a function $f$ with respect to a measure $G$.

\subsection{Main results}\label{sec31}
We consider a random variable $J$ taking values in $\mathbb{N}$. For each $J \in \mathbb{N}$, we consider a triangular array of linearly independent real-valued functions $\bm{\xi} = (\xi_1,\xi_2,\ldots,\xi_{J})^T$ defined on a region $\Omega_0$. In applications, $\Omega_0$ will be typically a bounded region. Note that the resulting basis functions may change from one stage to the next, although that is not made explicit in our notation. We use $\Pi$ as a generic notation for priors assigned on $J$ and the coefficients of basis functions $\bm{\theta}=(\theta_1,\ldots,\theta_{J})^T$.
\begin{enumerate}
\item[(A1)] For some $c_1,c_2 >0$, $0 \leq t_2 \leq t_1 \leq 1$,
$\exp\{ - c_1 j \log ^{t_1}j \}  \leq \Pi (J = j) \le \exp\{-c_2 j \log^{t_2}j \}.$
\item[(A2)] Given $J$, we consider a $J$-dimensional joint distribution as the prior for $\bm{\theta}=(\theta_1,\ldots,\theta_{J})^T$ satisfying $\Pi(\| \bm{\theta} -\bm{\theta}_0 \|_2 \leq \epsilon ) \geq \exp \{ - c_3 J \log (1/\epsilon) \}$ for every $\|\bm{\theta_0}\|_{\infty} \leq H$, where $c_3$ is some positive constant, $H$ is chosen sufficiently large and $\epsilon >0$ is sufficiently small. Also, assume that $\Pi(\bm{\theta} \notin [-M,M]^J) \leq J \exp\{- C M^{t_3}\}$ for some constants $C, t_3 > 0$.
\end{enumerate}

\begin{remark}\rm
Geometric, Poisson and negative binomial distributions on $J$ satisfy Condition (A1) respectively with $t_1=t_2=0$, $t_1=t_2=1$ and $t_1=t_2=0$. Examples of priors satisfying (A2) include independent gamma, exponential distributions assigned on each element of $\bm{\theta}$ and multivariate normal and Dirichlet distributions provided the parameters lie in a fixed compact set; see Lemma 6.1 of \citet{Ghosal2000} for the last conclusion.
\end{remark}

We consider a distance metric $d$ on functions belonging to $\Omega_0$ satisfying the following condition for every $\bm{\theta_1},\bm{\theta}_2\in \mathbb{R}^J$, $J \in \mathbb{N}$, and some positive increasing function $a(\cdot)$:
\begin{eqnarray}
d(\bm{\theta}_1^T \bm{\xi} ,\bm{\theta}_2^T \bm{\xi} )  \leq   a(J)\|\bm{\theta_1} -\bm{\theta}_2 \|_2.  \label{eq:e302}
\end{eqnarray}

Now we state the main theorem, which gives unified conditions for posterior contraction rates for various inference problems, in a manner similar to Theorem 2.1 of \citet{Vander2008} and Theorem 3.1 of \citet{Vander2009}.

\begin{theorem}\label{thm:t0}
Let $\epsilon_n \geq \bar{\epsilon}_n$ be two sequence of positive numbers satisfying $\epsilon_n \rightarrow 0$ and $n \bar{\epsilon}_n^2 \rightarrow \infty$ as $n \rightarrow \infty$. For a function $w_0$, suppose that a prior satisfies Conditions (A1) and (A2). Assume that there exist sequences of positive numbers $J_n$, $\bar{J}_n$ and $M_n$, a strictly decreasing, nonnegative function $e(\cdot)$ and  $\bm{\theta}_{0,j} \in \mathbb{R}^j$ for any $j \in \mathbb{N}$, such that the following conditions hold for some positive constants $a_1>1$, $a_2$, $C_0$ and $H$:
\begin{eqnarray}
& \|\bm{\theta}_{0,j}\|_{\infty} \leq H,~~ d(w_0,\bm{\theta}_{0,j}^T \bm{\xi}) \leq e(j), \label{eq:e301n} \\
& J_n \left\{\log J_n + \log a(J_n) + \log M_n + C_0 \log n \right\} \leq n\epsilon_n^2, \label{eq:e302n} \\
& e(\bar{J}_n) \leq \bar{\epsilon}_n,~~c_1 \bar{J}_n \log^{t_1} \bar{J}_n  + c_3 \bar{J}_n \log (2a(\bar{J}_n) /\bar{\epsilon}_n) \leq a_2 n \bar{\epsilon}_n^2,
\label{eq:e303n} \\
&  n\bar{\epsilon}_n^2 \leq C J_n \log^{t_2} J_n~ \text{for any constant}~C,~~J_n \exp\{-C M_n^{t_3}\} \leq (a_1- 1) \exp \{- n\bar{\epsilon}_n^2\}. \label{eq:e304n}
\end{eqnarray}
Let $\mathcal{W}_{J_n,M_n} = \{w = \bm{\theta}^T \bm{\xi}: \bm{\theta} \in \mathbb{R}^j,  j\leq J_n, \|\bm{\theta}\|_{\infty} \leq M_n \}$. Then the following assertions hold:
\begin{eqnarray}
\log D(n^{-C_0},\mathcal{W}_{J_n,M_n},d) & \leq & n \epsilon_n^2, \label{eq:e309} \\
 \Pi (W \notin \mathcal{W}_{J_n,M_n} ) & \leq & a_1 \exp \{-bn \bar{\epsilon}_n^2\}, \label{eq:e310} \\
-\log \Pi \{ w = \bm{\theta}^T \bm{\xi} :d( w_0,w ) \leq  \bar{\epsilon}_n \} & \leq & a_2 n\bar{\epsilon}_n^2. \label{eq:e311}
\end{eqnarray}
\end{theorem}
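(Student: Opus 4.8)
The plan is to verify the three displayed assertions, which are precisely the entropy, remaining-mass, and prior-concentration conditions demanded by the general posterior contraction machinery (Theorem 2.1 of \citet{Vander2008}). Each will follow by combining the two prior conditions (A1) and (A2) with the Lipschitz-type metric bound \eqref{eq:e302}, the approximation hypothesis \eqref{eq:e301n}, and the numerical conditions \eqref{eq:e302n}--\eqref{eq:e304n}; no probabilistic estimate deeper than a union bound and a volumetric covering count is needed.

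For the entropy bound \eqref{eq:e309}, I would first write $\mathcal{W}_{J_n,M_n} = \bigcup_{j \leq J_n} \{\bm{\theta}^T\bm{\xi}: \bm{\theta} \in [-M_n,M_n]^j\}$, so that its packing number is at most the sum over $j \leq J_n$ of the packing numbers of the pieces. By \eqref{eq:e302}, two coefficient vectors within $L_2$-distance $\delta$ yield functions within $d$-distance $a(j)\delta$, so a $d$-packing of the $j$-th piece at scale $n^{-C_0}$ reduces to an $L_2$-packing of the box $[-M_n,M_n]^j$ at scale $n^{-C_0}/a(j)$. A standard volumetric estimate bounds the latter by $(C M_n \sqrt{j}\,a(j)\,n^{C_0})^j$. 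Taking logarithms, summing over $j \leq J_n$, and noting that the term $j = J_n$ dominates gives $\log D(n^{-C_0},\mathcal{W}_{J_n,M_n},d) \lesssim \log J_n + J_n\{\log J_n + \log a(J_n) + \log M_n + C_0 \log n\}$, which is at most $n\epsilon_n^2$ by \eqref{eq:e302n}.

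For the remaining-mass bound \eqref{eq:e310}, I would decompose $\{W \notin \mathcal{W}_{J_n,M_n}\} \subseteq \{J > J_n\} \cup \{\|\bm{\theta}\|_\infty > M_n\}$ and treat the pieces separately. The upper tail in (A1) gives $\Pi(J > J_n) = \sum_{j > J_n}\Pi(J=j) \lesssim \exp\{-c_2 J_n \log^{t_2}J_n\}$, since the geometric-type decay makes the series comparable to its leading term; the first part of \eqref{eq:e304n} then forces $J_n\log^{t_2}J_n$ to dominate $n\bar{\epsilon}_n^2$, producing the required exponential rate. For the coefficient tail, conditioning on $J = j \leq J_n$ and applying the second part of (A2) gives $\Pi(\|\bm{\theta}\|_\infty > M_n \mid J = j) \leq j\exp\{-CM_n^{t_3}\}$; summing against $\Pi(J=j)$ over $j \leq J_n$ leaves at most $J_n\exp\{-CM_n^{t_3}\}$, which by the second part of \eqref{eq:e304n} is at most $(a_1-1)\exp\{-n\bar{\epsilon}_n^2\}$. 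Adding the two contributions yields \eqref{eq:e310}.

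The main obstacle is the prior-concentration bound \eqref{eq:e311}, where an approximation statement about $w_0$ must be converted into a prior lower bound. I would fix $J = \bar{J}_n$ and use \eqref{eq:e301n} to select $\bm{\theta}_{0,\bar{J}_n}$ with $\|\bm{\theta}_{0,\bar{J}_n}\|_\infty \leq H$ and $d(w_0,\bm{\theta}_{0,\bar{J}_n}^T\bm{\xi}) \leq e(\bar{J}_n) \leq \bar{\epsilon}_n$ by the first part of \eqref{eq:e303n}. By \eqref{eq:e302}, whenever $\|\bm{\theta}-\bm{\theta}_{0,\bar{J}_n}\|_2 \leq \bar{\epsilon}_n/(2a(\bar{J}_n))$ the two functions are within $d$-distance $\bar{\epsilon}_n/2$, so the triangle inequality places $w = \bm{\theta}^T\bm{\xi}$ inside the $d$-ball of radius a constant multiple of $\bar{\epsilon}_n$ about $w_0$. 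Hence $\Pi\{d(w_0,w)\leq\bar{\epsilon}_n\}$ is bounded below by $\Pi(J=\bar{J}_n)$ times the conditional probability $\Pi(\|\bm{\theta}-\bm{\theta}_{0,\bar{J}_n}\|_2 \leq \bar{\epsilon}_n/(2a(\bar{J}_n)) \mid J = \bar{J}_n)$; invoking the (A1) lower bound together with the (A2) small-ball bound at $\epsilon = \bar{\epsilon}_n/(2a(\bar{J}_n))$ gives $-\log\Pi\{d(w_0,w)\leq\bar{\epsilon}_n\} \leq c_1\bar{J}_n\log^{t_1}\bar{J}_n + c_3\bar{J}_n\log(2a(\bar{J}_n)/\bar{\epsilon}_n)$, at most $a_2 n\bar{\epsilon}_n^2$ by the second part of \eqref{eq:e303n}. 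The delicate point throughout is the coupling of scales: choosing the coefficient-space radius as $\bar{\epsilon}_n/(2a(\bar{J}_n))$ so that \eqref{eq:e302} delivers the needed $d$-closeness, while the logarithmic penalty $\log(2a(\bar{J}_n)/\bar{\epsilon}_n)$ incurred in (A2) remains absorbable into $n\bar{\epsilon}_n^2$.
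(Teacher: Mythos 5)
Your proposal is correct and follows essentially the same route as the paper's own proof: the same union-over-$j$ volumetric entropy bound reduced to Euclidean packing via \eqref{eq:e302}, the same two-part decomposition (tail of $J$ plus coefficient tail) for the remaining prior mass, and the same fix-$J=\bar{J}_n$ triangle-inequality argument combining \eqref{eq:e301n}, (A1) and the (A2) small-ball bound for the prior concentration. The one blemish---that the triangle inequality actually yields a ball of radius $3\bar{\epsilon}_n/2$ rather than $\bar{\epsilon}_n$, so a constant adjustment is needed---is shared by the paper itself, whose argument produces radius $2\bar{\epsilon}_n$.
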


\begin{proof}
We first verify \eqref{eq:e309}, using the definition of packing number, the assumptions on $M_n$, $J_n \geq 2$, the fact that $a(\cdot)$ is increasing and \eqref{eq:e302}, we obtain
\begin{eqnarray}
 \lefteqn{ \log D(n^{-C_0},\mathcal{W}_{J_n,M_n},d) } \nonumber \\
& \leq & \log \Big\{ \sum_{j=1}^{J_n} D(n^{-C_0} / a(j), \{ \bm{\theta} \in  \mathbb{R}^j, \|\bm{\theta}\|_{\infty} \leq M_n\}, \| \cdot \|_2) \Big\} \nonumber \\
& \leq & \log \Big[ J_n \Big\{ \sqrt{J_n} M_n a(J_n) n^{C_0} \Big\}^{J_n} \Big] \nonumber \\
& \leq & J_n (\log J_n + \log M_n + \log a(J_n) + C_0 \log n ) \leq n \epsilon_n^2.
\end{eqnarray}
Next, to verify \eqref{eq:e310}, observe that for some $c_2' > 0$, 
\begin{eqnarray}
\Pi (w \notin \mathcal{W}_{J_n,M_n} ) & \leq & \Pi (J > J_n) + \sum_{j=1}^{J_n} \Pi  ( \bm{\theta} \notin [-M_n,M_n]^j ) \Pi (J=j) \nonumber \\
                                     & \leq & \exp(-c_2' J_n \log^{t_2} J_n ) + J_n \exp \{-C M_n^{t_3}\} \nonumber \\
                                     & \leq &  a_1 \exp \{- n\bar{\epsilon}_n^2\}.
\end{eqnarray}
For \eqref{eq:e311}, using \eqref{eq:e301n}, since $d(w_0,\bm{\theta}_{0,j}^T \bm{\xi}) \leq e(j) \leq \bar{\epsilon}_n$ for all $j \geq \bar{J}_n$, we have
\begin{eqnarray}
\Pi \{w:d(w_0 , \bm{\theta}^T \bm{\xi}) \leq 2\bar{\epsilon}_n \} & \geq & \Pi(J = \bar{J}_n) \Pi \left(\| \bm{\theta} -\bm{\theta}_0 \|_2 \leq  \bar{\epsilon}_n/a(\bar{J}_n) \right)  \nonumber \\
 & \geq & \exp\{ - c_1 \bar{J}_n \log^{t_1} \bar{J}_n\} \exp \Big\{ - c_3 \bar{J}_n \log \Big(\frac{a(\bar{J}_n)}{  \bar{\epsilon}_n}\Big) \Big\}.
\end{eqnarray}
By taking the negative of the logarithm on both sides, and using \eqref{eq:e303n}, we obtain \eqref{eq:e311}.
\end{proof}

Conditions \eqref{eq:e302n}--\eqref{eq:e304n} require sufficiently large $J_n, \bar{J}_n$ in order to have sufficiently good approximation to $w_0$ while $J_n, \bar{J}_n$ should not be too large if the complexity of the model is to be controlled. When studying Bayesian asymptotic properties, a balance between bias and complexity needs to be established to obtain the optimal posterior contraction rate.

Theorem \ref{thm:t0} can be further simplified to obtain a posterior contraction rate at $w_0$. We assume that the approximation error is of the form $e(J) \lesssim J^{-\alpha}$ for $\alpha$-smooth functions. Such collections include B-splines, wavelets, Fourier series and many other commonly used bases. Let $d$ be the $L_2$-distance or the $L_{\infty}$-distance. For two groups of densities $p_{i,w_1},p_{i,w_2},i=1,\ldots,n$, we consider the root average squared Hellinger distance, defined by $\rho_n^2(w_1,w_2) = n^{-1} \sum_{i=1}^n h^2(p_{i,w_1},p_{i,w_2})$. Note that when the observations are i.i.d., $\rho_n$ reduces to the usual Hellinger distance. Then the following result gives the posterior contraction rate for various inference problems.

\begin{theorem}\label{thm:t1}
Suppose that we have independent observations $X_i$ following some distributions with densities $p_{i,w}$, $i=1,\ldots,n$ respectively. Let $w_0 \in \mathcal{C}^{\alpha}(\Omega_0)$ be the true value of $w$. Let $r$ be either $2$ or $\infty$. Let $\epsilon_n \geq \bar{\epsilon}_n$ be two sequence of positive numbers satisfying $\epsilon_n \rightarrow 0$ and $n \bar{\epsilon}_n^2 \rightarrow \infty$ as $n \rightarrow \infty$. Assume that there exists a $\bm{\theta_0} \in \mathbb{R}^J$, $\|\bm{\theta}_0\|_{\infty} \leq H$ and some positive constants $C_1,C_2$ and $K_0\ge 0$ satisfying
\begin{align}
\|w_0 -\bm{\theta}_0^T \bm{\xi}\|_r & \leq C_1 J^{-\alpha},  \label{eq:eth1}\\
\|\bm{\theta}_1^T\bm{\xi} -\bm{\theta}_2^T\bm{\xi}\|_r & \leq C_2 J^{K_0} \|\bm{\theta}_1-\bm{\theta}_2\|_2,~\bm{\theta}_1,\bm{\theta}_2 \in \mathbb{R}^J. \label{eq:eth2}
 \end{align}
Assume that the prior on $J$ and $\bm{\theta}$ satisfy Conditions (A1) and (A2). Let $J_n$, $\bar{J}_n \geq 2$ and $M_n$ be sequences of positive numbers such that the following hold for positive constants $a_3,a_4,c_3,c_4,C$ and any given constant $b>0$:
\begin{eqnarray}
&  b n\bar{\epsilon}_n^2 \le  J_n \log^{t_2}J_n,
~~ \log J_n +  n\bar{\epsilon}_n^2 \leq  M_n^{t_3}, \label{eq:e22} \\
& J_n \{(K_0+1) \log J_n + \log M_n +C_0 \log n\} \leq n \epsilon_n^2, \label{eq:a117} \\
&  \bar{J}_n^{-\alpha}  \leq \bar{\epsilon}_n, ~~\bar{J}_n \{c_1 \log^{t_1} \bar{J}_n + c_3 K_0 \log (\bar{J}_n) + c_3 \log(1/\bar{\epsilon}_n) \} \leq  2n \bar{\epsilon}_n^2, \label{eq:t333} \\
& \rho_n(w_1,w_2) \lesssim n^{a_3} \|w_1-w_2\|_r^{a_4}~\text{for any}~w_1,w_2 \in \mathcal{W}_{J_n,M_n},  \label{eq:rela}\\
& \max \left\{n^{-1} \sum_{i=1}^n K(p_{i,w_0},p_{i,w}), n^{-1} \sum_{i=1}^n  V(p_{i,w_0},p_{i,w}) \right\}
 \le C  \|w_1-w_2\|_r^2 , \label{eq:a114}
\end{eqnarray}
provided $\|w_1-w_2\|_r$ is sufficiently small. Then the posterior distribution of $w$ contracts at $w_0$ at the rate $\epsilon_n$ with respect to $\rho_n$.
\end{theorem}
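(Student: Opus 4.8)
The plan is to deduce Theorem~\ref{thm:t1} from Theorem~\ref{thm:t0} together with the general theory of posterior contraction for independent, not-necessarily-identically-distributed observations, in the form of Theorem~2.1 of \citet{Vander2008} and Theorem~3.1 of \citet{Vander2009}. First I would instantiate the abstract ingredients of Theorem~\ref{thm:t0} by taking the metric $d$ to be the $L_r$-distance $\|\cdot\|_r$, setting $a(J)=C_2 J^{K_0}$ so that the Lipschitz bound \eqref{eq:eth2} becomes exactly \eqref{eq:e302}, and taking $e(j)=C_1 j^{-\alpha}$ with $\bm{\theta}_{0,j}=\bm{\theta}_0$ so that \eqref{eq:eth1} together with $\|\bm{\theta}_0\|_\infty\le H$ yields \eqref{eq:e301n}. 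It then remains to check that the numerical conditions \eqref{eq:e302n}--\eqref{eq:e304n} are implied by \eqref{eq:e22}, \eqref{eq:a117} and \eqref{eq:t333}: the term $\log a(J_n)=\log C_2+K_0\log J_n$ is absorbed into the $(K_0+1)\log J_n$ appearing in \eqref{eq:a117}, the lower bound $n\bar{\epsilon}_n^2\lesssim J_n\log^{t_2}J_n$ is the first half of \eqref{eq:e22}, and $\log J_n+n\bar{\epsilon}_n^2\le M_n^{t_3}$ from the second half of \eqref{eq:e22} gives $J_n\exp\{-CM_n^{t_3}\}\le(a_1-1)\exp\{-n\bar{\epsilon}_n^2\}$. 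Applying Theorem~\ref{thm:t0} then delivers the three assertions \eqref{eq:e309}--\eqref{eq:e311}, now read with $d=\|\cdot\|_r$.

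The next step is to recast these $L_r$-statements in the form required by the general contraction theorem, whose entropy and prior-mass conditions are phrased through the root-average squared Hellinger semimetric $\rho_n$ and the Kullback--Leibler neighborhood built from the averaged divergences in \eqref{eq:a114}. For the entropy I would use \eqref{eq:rela}: since $\rho_n(w_1,w_2)\lesssim n^{a_3}\|w_1-w_2\|_r^{a_4}$ on $\mathcal{W}_{J_n,M_n}$, any set that is $\epsilon_n$-separated in $\rho_n$ is, up to a multiplicative constant, $(\epsilon_n n^{-a_3})^{1/a_4}$-separated in $\|\cdot\|_r$, so that
\[
  \log D(\epsilon_n,\mathcal{W}_{J_n,M_n},\rho_n)\ \le\ \log D\bigl(c\,(\epsilon_n n^{-a_3})^{1/a_4},\,\mathcal{W}_{J_n,M_n},\,\|\cdot\|_r\bigr).
\]
Since $n\bar{\epsilon}_n^2\to\infty$ forces $\epsilon_n\ge\bar{\epsilon}_n\gg n^{-1/2}$, the quantity $(\epsilon_n n^{-a_3})^{1/a_4}$ is bounded below by a fixed negative power of $n$, so the free constant $C_0$ can be chosen large enough that $n^{-C_0}\le c\,(\epsilon_n n^{-a_3})^{1/a_4}$; the right-hand side is then bounded by \eqref{eq:e309} and hence by $n\epsilon_n^2$.

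For the prior concentration I would use \eqref{eq:a114} with the pair $(w_0,w)$: if $\|w_0-w\|_r$ is a sufficiently small multiple of $\bar{\epsilon}_n$, then both $n^{-1}\sum_{i}K(p_{i,w_0},p_{i,w})$ and $n^{-1}\sum_{i}V(p_{i,w_0},p_{i,w})$ are at most $\bar{\epsilon}_n^2$, so the $L_r$-ball occurring in \eqref{eq:e311} lies inside the Kullback--Leibler neighborhood. Applying \eqref{eq:e311} with $\bar{\epsilon}_n$ rescaled by the fixed constant thus lower bounds the prior mass of that neighborhood by $\exp\{-a_2'n\bar{\epsilon}_n^2\}$ for some $a_2'>0$. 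The remaining sieve condition is \eqref{eq:e310} verbatim, and by enlarging the free constant $b$ the prior mass of $\mathcal{W}_{J_n,M_n}^c$ is made negligible relative to this concentration. Feeding the entropy bound at the slow rate $\epsilon_n$ and the two prior bounds at the fast rate $\bar{\epsilon}_n$ into the two-rate contraction theorem of \citet{Vander2008,Vander2009} yields contraction of the posterior of $w$ at $w_0$ at the rate $\epsilon_n$ with respect to $\rho_n$.

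I expect the main difficulty to be bookkeeping rather than conceptual. One must propagate the several free constants ($C_0$, $b$, $a_1$, $a_2$) through the two metric conversions so that the polynomial factor $n^{a_3}$ and the exponent $1/a_4$ generated by \eqref{eq:rela} are absorbed without breaching the $n\epsilon_n^2$ and $n\bar{\epsilon}_n^2$ budgets, and one must apply \eqref{eq:a114} specifically to the pair $(w_0,w)$ so that the $L_r$-ball genuinely lands inside the KL neighborhood defined through the averaged divergences.
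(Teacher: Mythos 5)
Your proposal is correct and takes essentially the same route as the paper's own proof: instantiate Theorem~\ref{thm:t0} with $d=\|\cdot\|_r$, $a(J)=C_2J^{K_0}$, $e(j)=C_1j^{-\alpha}$ and $\bm{\theta}_{0,j}=\bm{\theta}_0$, convert the entropy bound through \eqref{eq:rela} and the prior concentration through \eqref{eq:a114} applied to the pair $(w_0,w)$, keep the sieve bound \eqref{eq:e310} as is, and feed the three resulting conditions into the general contraction theorem for independent non-i.i.d.\ observations (the paper invokes Theorem~4 of \citet{Ghosal20071} rather than the van der Vaart--van Zanten statements you cite, but the machinery is the same). If anything, your bookkeeping is slightly more careful than the paper's, e.g.\ the separation scale $(\epsilon_n n^{-a_3})^{1/a_4}$ is the correct algebraic consequence of \eqref{eq:rela}, where the paper writes $n^{-a_3}\epsilon_n^{a_4}$, and you make explicit the role of the free constant $b$ in rendering $\Pi(\mathcal{W}_{J_n,M_n}^c)$ negligible relative to the prior concentration.
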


\begin{proof}
In order to obtain the posterior contraction rate, we verify the following conditions as described in Theorem 4 of \citet{Ghosal20071}:
\begin{eqnarray}
& \log D(\epsilon_n,\mathcal{W}_{J_n,M_n} , \rho_n) \leq b_1 n\epsilon_n^2, \label{eq:a111} \\
& \Pi(w \notin \mathcal{W}_{J_n,M_n})  \leq b_3 \exp \{-n\epsilon_n^2\}, \label{eq:a112} \\
& \Pi(\mathcal{K}(w_0,\bar{\epsilon}_n)) \geq b_4 \exp \{-b_2 n \bar{\epsilon}_n^2\}, \label{eq:a1131}
\end{eqnarray}
where $\mathcal{W}_{J_n,M_n}$ is defined in Theorem \ref{thm:t0} and $b_1$, $b_2$, $b_3$, $b_4$ are some positive constants. Note that the conditions in Theorem \ref{thm:t0} are satisfied for $a_1=1$ and $a_2=2$ in the following way: \eqref{eq:e301n} is satisfied by the approximation assumption of $\bm{\xi}$; \eqref{eq:a117} implies \eqref{eq:e302n}; \eqref{eq:t333} implies \eqref{eq:e303n}; \eqref{eq:e304n} holds because of \eqref{eq:e22}. Using condition \eqref{eq:rela}, we obtain $$\log D(\epsilon_n,\mathcal{W}_{J_n,M_n},\rho_n)  \lesssim \log D(n^{-a_3} \epsilon_n^{a_4},\mathcal{W}_{J_n,M_n},\|\cdot\|_r) \lesssim n \epsilon_n^2$$ because $n^{-a_3} \epsilon_n^{a_4}$ is lower bounded by a polynomial in $n^{-1}$. Also, $ \Pi (w \notin \mathcal{W}_{J_n,M_n} )  \leq  2 \exp \{- n \epsilon_n^2\}$, therefore relation \eqref{eq:a112} holds for $b_3=2$. For \eqref{eq:a1131}, observe that
$
\Pi(\mathcal{K}(w_0,\bar{\epsilon}_n)) \geq \Pi(\|w - w_0\|_r \leq  \bar{\epsilon}_n)$
so the conclusion holds for an appropriate adjustment of constants in the definitions of the rates $\bar{\epsilon}_n$ and $\epsilon_n$.
\end{proof}

\begin{remark}\rm
For $r=2$ or $\infty$, relation \eqref{eq:eth1} holds for polynomials, Fourier series, B-splines and wavelets.
Relation \eqref{eq:eth2} holds for B-splines, polynomials and Fourier series base with $K_0=1/2$ when $r = 2$ and $K_0=1$ when $r = \infty$. For wavelets, \eqref{eq:eth2} holds with $K_0=1$ for $r=2,\infty$. This is because $\|(\bm{\theta_1} -\bm{\theta}_2)^T \bm{\xi}\|_p \leq \sum_{j=1}^J |\theta_{1j} - \theta_{2j}| \max_{1\leq j \leq J} \|\bm{\xi}_j\|_p \leq \sqrt{J} \|\bm{\theta}_1 -\bm{\theta}_2\|_2 C_{p,J}$ for $C_{p,J} = \max_{1\leq j \leq J} \|\bm{\xi}_j\|_p $ and $ 1 \leq p \leq \infty$. For B-splines, polynomials and Fourier series bases, $C_{p,J} \asymp 1$ when $p=2$ and $C_{p,J} \asymp \sqrt{J}$ when $p=\infty$. For wavelets, $C_{p,J} \asymp \sqrt{J}$ for $p=2,\infty$.
\end{remark}

\begin{remark}\rm\label{rm:nuis}
It is possible to incorporate a finite-dimensional nuisance parameter $\bm{\eta}$ in our setup, such as a scale parameter in a normal regression model. In this case, the sieve will be defined as the product of $\mathcal{W}_{J_n,M_n}$ with a suitable sieve for $\bm{\eta}$ whose metric entropy can be appropriately controlled and whose complement has exponentially small prior probability; see Remark \ref{sigmarm} for a concrete analysis.
\end{remark}

Theorem \ref{thm:t1} suggests that in order to obtain adaptive posterior contraction rates, it is crucial to choose sequences $J_n$, $\bar{J}_n$, $\epsilon_n$, $M_n$ in the rate equations \eqref{eq:e22}--\eqref{eq:t333} and bound the KL-divergences by the squared Euclidean distance $\|\cdot\|_r^2$.
Bounding the KL-divergence can be very different for various statistical problems, while the choices of $J_n$ and $\bar{J}_n$ are common for a set of basis functions. The following examples illustrate the use of the theorem.

\begin{example}[Fourier trigonometric series] \rm
For a function $w_0 \in \mathcal{C}^{\alpha}(0,1)$, the best approximation has the error $e(J) \asymp  J^{-\alpha} $ \citep{Dai2013}. Then the rate calculation proceeds in the following way: \eqref{eq:t333} implies $\bar{J}_n^{-\alpha} \lesssim \bar{\epsilon}_n$ and $\bar{J}_n \log n \lesssim n\bar{\epsilon}_n^2$, and hence $\bar{\epsilon}_n \asymp n^{-\alpha/(2\alpha+1)} (\log n)^{\alpha/(2\alpha+1)}$ and $\bar{J}_n \asymp (n/\log n)^{1/(2\alpha+1)} $. Now use \eqref{eq:e22}, we have $J_n \log^{t_2} n \gtrsim n\bar{\epsilon}_n^2$, hence we choose $J_n \asymp n^{1/(2\alpha+1)} (\log n)^{2\alpha/(2\alpha+1)-t_2}$. Note that \eqref{eq:a117} implies $J_n \log n \lesssim n \epsilon_n^2$. As a result, we choose $\epsilon_n \asymp n^{-\alpha/(2\alpha+1)} (\log n)^{\alpha/(2\alpha+1)+(1-t_2)/2}$.
\end{example}

\begin{example}[Bernstein polynomials] \rm
We consider the Bernstein polynomial prior proposed by \citet{Petrone1999b}. Consider a continuously differentiable density function $w_0$ with bounded second derivative, the approximation property of Bernstein polynomials to $w_0$ is $e(J) = C / J$ for some universal constant $C$ and $r=2$ \citep{Lorenz1953}. We can choose $\bar{J}_n = (n/\log n)^{1/3}$, $J_n=n^{1/3} (\log n)^{2/3 -t_2}$, $\bar{\epsilon}_n = (n/\log n)^{-1/3}$ and $M_n=n^{1/t_3}$. The rate $\epsilon_n$ is $n^{-1/3}(\log n)^{1/3 + (1-t_2)/2}$, which has the same polynomial power as given in \citet{Ghosal20012}. In fact, for any $0 \leq \alpha \leq 2$, the approximation rate of Bernstein polynomials is $J^{-\alpha/2}$ and the resulting posterior contraction rate is $n^{-\alpha/2(\alpha+1)}(\log n)^{\alpha/2(\alpha+1)}$; see \citet{Kruijer2008}. The poor contraction rate stems from the poor approximation rate of Bernstein polynomials. \citet{Kruijer2008} used coarsened Bernstein polynomials and showed that for any $f \in \mathcal{C}^{\alpha}(0,1)$ with $0 \leq \alpha \leq 1$, the approximation rate with $J$ undetermined parameters is $J^{-\alpha}$. If we choose $\bar{J}_n \asymp (n/\log n)^{-\alpha/(2\alpha+1)}$, then the rate is $\epsilon_n =n^{-\alpha/(2\alpha+1)} (\log n)^{\alpha/(2\alpha+1)+(1-t_2)/2}$, which adapts in the range $0 \leq \alpha \leq 1$.
\end{example}

\begin{example}[Polynomial basis] \rm
Consider the orthogonal Legendre polynomials as the approximation tool for $w_0 \in \mathcal{C}^{\alpha}(0,1)$. The rate of approximation is identical with that of the Fourier series under
the $L_2$- or the $L_{\infty}$-metrics (e.g., Theorem 6.1 of \citealp{Hesthaven2007}). Hence the choice of $J_n$, $M_n$ and rates are exactly the same with Example 1.
\end{example}

\begin{example}[B-splines] \rm
If we choose the B-spline functions (see Appendix) as the basis, then for $w_0 \in \mathcal{C}^{\alpha}(0,1)$, we have $e(J) \asymp J^{-\alpha}$ for either the $L_2$ or the $L_\infty$-distance. Thus the choices of the sequences and the resulting rate $\epsilon_n$ are the same as in the case polynomial or Fourier basis. However, one distinguishing property of the B-spline basis is the non-negativity of the basis functions so positive linear combinations are positive. Further we show in Appendix B that coefficients of a B-spline basis expansion can be restricted appropriately if the target function satisfies some restrictions. The property will allow some special prior distribution on the coefficients so that posterior moments can be calculated without using MCMC techniques.
\end{example}

\begin{example}[Wavelets] \rm
We consider a multiresolution truncated wavelet series
\begin{eqnarray}
\sum_{k=1}^{2^m-1} \alpha_{k} \phi_{k} (x) + \sum_{j=0}^{m} \sum_{k=1}^{2^m-1} \beta_{jk} \psi_{jk}(x),
\end{eqnarray}
where the boundary corrected wavelet basis of \citet{Cohen1993} is used since the domain is the unit interval, which results in a finite number of terms in the above expansion.
We put priors on $m$ and wavelet coefficients $\alpha_k$ and $\beta_{jk}$ for all possible values of $j,k$. It is well known that, for $w_0 \in \mathcal{C}^{\alpha}(0,1)$, the $L_2$-approximation error is $e(m) = 2^{-m \alpha}$. Hence we apply Theorem \ref{thm:t1} for $J = 2^{m}$ and choose $\bar{J}_n = (n/\log n)^{1/(2\alpha+1)}$, $J_n = n^{1/(2\alpha+1)} (\log n)^{2\alpha/(2\alpha+1) -t_2}$, $M_n=n^{1/t_3}$ and $\bar{\epsilon}_n = (n/\log n)^{-\alpha/(2\alpha+1)}$. Doing the same calculation as in Example 1, the resulting rate $\epsilon_n$ is $n^{-\alpha/(2\alpha+1)} (\log n)^{\alpha/(2\alpha+1)+(1-t_2)/2}$. This coincides with the adaptation results for white noise models in \citet{Lian2011} and for density estimation and regression models in \citet{Rivoirard2012}.
\end{example}

\begin{example}[Multivariate B-splines] \rm
Theorem \ref{thm:t0} can be used in multi-dimensional situation as well. Consider the tensor-product B-splines \citep{Schumaker2007} as a basis in $\mathcal{C}^{\alpha}(0,1)^s$. Then we have $e(J) \asymp J^{-\alpha/s}$ for $r=2$ or $\infty$, where $J=K^s$, and $K$ is the number of univariate B-spline functions used in making the tensor products. Apply Theorem \ref{thm:t1} with $\bar{J}_n = (n/\log n)^{1/(2\alpha+s)}$, $\bar{\epsilon}_n = (n/\log n)^{-\alpha/(2\alpha+s)}$, $M_n=n^{1/t_3}$ to obtain the rate $\epsilon_n$ as $n^{-\alpha/(2\alpha+s)}$ multiplied by some power of $\log n$, where the power depends on the statistical problem.
\end{example}

In these examples, we find that a power of $\log n$ is always present in the obtained rates. This is partly because we are dealing with a general class of problems. It is not clear whether such logarithmic terms can be removed and optimality can be established. Some negative results are given by \citet{Castillo2014}, where a sharp rate with a precise logarithmic term is obtained under $L_2$-loss. In some special situations, this logarithmic factor can be removed by using particular types of priors, such as \citet{Huang2004}, \citet{Ghosal2008} and \citet{Gao2013}.

\section{Density estimation}\label{sec5}
In this section, we illustrate how Theorem \ref{thm:t1} can be used to obtain adaptive posterior contraction rate for both the univariate and the multivariate density estimation where in the latter case the true density can be anisotropic, allowing different smoothness in different direction. We also discuss an MCMC-free method for calculating posterior moments by using a special conjugate-like prior on the coefficient vector.

\subsection{Univariate density estimation}
We consider estimation of a density defined on $(0,1)$. Frequentist optimal rate of contraction $n^{-\alpha/(2\alpha+1)}$ was obtained for the maximum likelihood estimators in \citet{Hasminskii1978}. A Bayesian method using a log-spline prior was studied in \citet{Ghosal2000}, where the optimal posterior contraction rate $n^{-\alpha/(2\alpha+1)}$ was obtained. When $\alpha$ is unknown, the adaptive posterior contraction rate $n^{-\alpha/(2\alpha+1)}$, possibly up to an additional logarithmic factor, was established in \citet{Ghosal2002,Ghosal2008}.

Consider estimating a density function $p$ on $(0,1)$. A prior can be induced on $p$ by using basis functions through a nonnegative, monotonic, locally Lipschitz continuous link function $\Psi$, i.e., $p_{w} = \Psi(w )/\int_0^1 \Psi\{w(x)\}dx$ for $ w = \bm{\theta}^T \bm{\xi}$, $\bm{\theta} \in \mathbb{R}^{J}$ and $J$ is given a prior on $\mathbb{N}$. If we choose $\Psi$ as the exponential function and $\bm{\xi}$ as the B-spline, then it gives the log-spline prior. We can also choose $\Psi$ as the identity function, and restrict the prior for $\bm{\theta}$ on $\Delta_J$ when using the B-spline basis, by Lemma \ref{lemma:l00}, part (d) in the Appendix.

\begin{corollary}
Suppose that we have i.i.d observations $X_1,\ldots,X_n$ generated from a density $p_0$, which satisfies $w_0=\Psi^{-1} (p_0) \in \mathcal{C}^{\alpha}(0,1)$ and that $w_0$ is bounded in $[\underline{M},\overline{M}]$ for some positive constants $\underline{M}$ and $\overline{M}$. We assume that the prior satisfies Conditions (A1) and (A2), and the basis $\bm{\xi}$ satisfies \eqref{eq:eth1} and \eqref{eq:eth2} with $r=\infty$. If either $\log \Psi$ is Lipscitz continuous or $c(w) = \int_0^1 \Psi\{w(x)\}dx > \underline{C} $ for some constant $\underline{C} > 0$,  then the posterior contraction rate is $\epsilon_n = n^{-\alpha/(2\alpha+1)} (\log n)^{\alpha /(2\alpha+1)+(1-t_1)/2} $ at $p_0$ with respect to the Hellinger distance.
\end{corollary}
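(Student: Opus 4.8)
The plan is to deduce the corollary directly from Theorem \ref{thm:t1} applied with $r=\infty$, so that almost all of the work reduces to (i) choosing the tuning sequences and (ii) translating the two ``statistical'' conditions \eqref{eq:rela} and \eqref{eq:a114} — which compare Hellinger and Kullback--Leibler type divergences between the induced densities $p_w=\Psi(w)/c(w)$ — into an ``analytic'' sup-norm bound on the generating functions $w=\bm{\theta}^T\bm{\xi}$. Conditions (A1), (A2) and the approximation/Lipschitz bounds \eqref{eq:eth1}, \eqref{eq:eth2} (with $K_0=1$, since $r=\infty$) are given. For the sequences I would reuse the B-splines example: take $\bar{J}_n=(n/\log n)^{1/(2\alpha+1)}$, $\bar{\epsilon}_n=(n/\log n)^{-\alpha/(2\alpha+1)}$, $J_n=n^{1/(2\alpha+1)}(\log n)^{2\alpha/(2\alpha+1)-t_2}$ and $M_n\asymp(n\bar{\epsilon}_n^2)^{1/t_3}$, so that $M_n^{t_3}\gtrsim\log J_n+n\bar{\epsilon}_n^2$. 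A direct substitution then verifies \eqref{eq:e22}, \eqref{eq:a117} and \eqref{eq:t333} and produces the stated rate $\epsilon_n$; the remaining two conditions are the only place the link function enters.

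The genuine content is \eqref{eq:rela} and \eqref{eq:a114}, and I would split this according to the two hypotheses on $\Psi$. First, when $\log\Psi$ is Lipschitz with constant $L$, I would control the log-likelihood ratio uniformly: pointwise $|\log\Psi(w_1(x))-\log\Psi(w_2(x))|\le L\|w_1-w_2\|_\infty$, and since the same two-sided pointwise bound passes to the ratio of normalizers $c(w_1)/c(w_2)$, one obtains $\|\log(p_{w_1}/p_{w_2})\|_\infty\le 2L\|w_1-w_2\|_\infty$. Feeding this into the standard inequalities bounding $h^2$, $K$ and $V$ by a constant multiple of $\|\log(p/q)\|_\infty^2$ when the latter is small yields \eqref{eq:a114} and gives \eqref{eq:rela} with $a_3=0$, $a_4=1$.

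Second, under the alternative hypothesis $c(w)>\underline{C}$, for \eqref{eq:a114} I would use that $w$ lies within $\bar{\epsilon}_n$ of $w_0\in[\underline{M},\overline{M}]$, hence in a fixed compact set on which $\Psi$ is Lipschitz and bounded above and below by positive constants; this forces $p_w$ and $p_{w_0}$ to be bounded away from $0$ and $\infty$ with $|p_w-p_{w_0}|\lesssim\|w-w_0\|_\infty$, after which the elementary bound $K\le\int(p_{w_0}-p_w)^2/p_w$ together with a matching bound for $V$ (valid since $\log(p_{w_0}/p_w)$ is then bounded) gives $K,V\lesssim\|w-w_0\|_\infty^2$. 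For \eqref{eq:rela} on the full sieve I would write $\sqrt{p_{w_i}}=\sqrt{\Psi(w_i)}/\|\sqrt{\Psi(w_i)}\|_2$ and use the normalized-vector inequality $h(p_{w_1},p_{w_2})\le 2\|\sqrt{\Psi(w_1)}-\sqrt{\Psi(w_2)}\|_2/\sqrt{c(w_1)}$, bounding the denominator below by $\sqrt{\underline{C}}$ and the numerator by a power of $\|w_1-w_2\|_\infty$ via the (Lipschitz, resp. H\"older-$\tfrac12$) continuity of $\sqrt{\Psi}$; any polynomial-in-$n$ factor coming from the range $[-M_n,M_n]$ is absorbed into $n^{a_3}$.

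The main obstacle is the $c(w)>\underline{C}$ branch of \eqref{eq:rela}, precisely because on the sieve $w$ is only bounded by $M_n\to\infty$ and $\Psi$ may vanish (as for the identity link used with B-spline coefficients restricted to $\Delta_J$). There one cannot invoke Lipschitz continuity of $\sqrt{\Psi}$ and must instead exploit that $\sqrt{\cdot}$ is H\"older of order $\tfrac12$, so that $\|\sqrt{w_1}-\sqrt{w_2}\|_2^2\le\|w_1-w_2\|_1\le\|w_1-w_2\|_\infty$ gives $h\lesssim\|w_1-w_2\|_\infty^{1/2}$ (i.e.\ $a_4=\tfrac12$), while keeping the normalizing constant bounded below uniformly over the sieve. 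A secondary, purely bookkeeping point is checking that the logarithmic exponents assemble to the claimed power; this is the same computation as in the B-splines example, and for the geometric, Poisson and negative binomial priors one has $t_1=t_2$, so the distinction is immaterial.
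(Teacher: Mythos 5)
Your proposal is correct and takes essentially the same route as the paper: reduce to Theorem \ref{thm:t1} with $r=\infty$ and the same sequences $\bar{J}_n$, $J_n$, $\bar{\epsilon}_n$, $M_n$, then verify \eqref{eq:rela} and \eqref{eq:a114} by controlling the normalizer $c(w)$ and exploiting the two alternative Lipschitz-type hypotheses on the link, with Hellinger bounded through the $L_1$-distance (giving $a_4=1/2$). The only difference is cosmetic: in the $\log\Psi$-Lipschitz case you bound $\|\log(p_{w_1}/p_{w_2})\|_\infty$ directly (including the normalizer ratio) and invoke second-order bounds on $h^2$, $K$, $V$, obtaining $a_4=1$, whereas the paper bounds $\|p_{w_1}-p_{w_2}\|_1$ via the $\exp[\log\Psi(w_1)-\log\Psi(w_2)]-1$ device and handles the Kullback--Leibler conditions once, for both cases, by the local sup-norm estimate near $w_0$ together with Lemma 8 of \citet{Ghosal2007}; both executions are valid.
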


\begin{proof}
If $w$ is uniformly close to $w_0$, $\|\Psi(w)-\Psi(w_0)\|_\iy$ is small and hence $c(w)=\int \Psi(w(x))dx$ is close to $\int \Psi(w_0(x))dx=\int p_0(x)dx=1$, and hence is bounded below. Thus we have the estimate
\begin{eqnarray}
\|p_w - p_0\|_{\infty}  & \leq & \left\| \frac{\Psi(w)}{c(w)} -\Psi(w)\right\|_{\infty} + \|\Psi(w) - \Psi(w_0) \|_{\infty} \nonumber \\
 & \leq & (c(w))^{-1} |c(w) - c(w_0) | \|\Psi(w)\|_{\infty} + \|\Psi(w) - \Psi(w_0) \|_{\infty} \nonumber \\
 & \lesssim & \|\Psi(w) - \Psi(w_0)\|_{\infty} \lesssim \|w-w_0\|_{\infty}.
\end{eqnarray}
 Note that because $p_0$ is bounded away from $0$, so is $p_w$ when $\|w-w_0\|_\iy$ is small. Now
\begin{eqnarray}
h^2(p_0,p_w) = \int \frac{|p_0-p_w|^2}{(\sqrt{p_0}+\sqrt{p_w})^2} \leq \frac{1}{\underline{M}} \|p_0 - p_w\|_{\infty}^2 \lesssim \|w-w_0\|_{\infty}^2.
\end{eqnarray}
Using Lemma 8 of \citet{Ghosal2007}, we have
\begin{eqnarray}\label{eq:e401}
K(p_0,p_w) \leq 2h^2(p_0,p_w)\Big \| \frac{p_0}{p_w} \Big \|_{\infty} \lesssim \|w-w_0\|_{\infty}^2 , \nonumber \\
V(p_0,p_w) \lesssim h^2(p_0,p_w) \Big( 1 + \Big \| \frac{p_0}{p_w} \Big \|_{\infty} \Big)^2 \lesssim \|w-w_0\|_{\infty}^2 .
\end{eqnarray}

Therefore \eqref{eq:a114} holds for $r=\infty$. Next, we verify \eqref{eq:rela}.  Note that because of the i.i.d assumption, $\rho_n$ is the Hellinger distance on $p_w$.
As the Hellinger distance is bounded by the square root of the $L_1$-distance, it suffices to bound the latter. If $\log \Psi$ is Lipschitz continuous with Lipschitz constant $L$, bound  $\|p_{w_1}-p_{w_2}\|_1$ by
$$
2 \frac{\|\Psi(w_1)-\Psi(w_2)\|_1}{c(w_1)}
\le  2\|\exp[\log \Psi(w_1)-\log\Psi(w_2)]-1\|_\iy \le 2L \|w_1-w_2\|_\iy e^{L\|w_1-w_2\|_\iy}.$$
On the other hand if $c(w)$ is bounded below by $\underline{C}$, we obtain
$$\|p_{w_1}-p_{w_2}\|_1\le 2 \frac{\|\Psi(w_1)-\Psi(w_2)\|_1}{c(w_1)} \lesssim \|\Psi(w_1) - \Psi(w_2)\|_1 \lesssim \|w_1 - w_2\|_{\infty}, $$
so that the assertion holds with $a_4=1/2$.

Now we apply Theorem \ref{thm:t1} with $\bar{J}_n = (n/\log n)^{1/(2\alpha+1)}$,
 $J_n = n^{1/(2\alpha+1)} (\log n)^{2\alpha/(2\alpha+1) -t_2}$, $\bar{\epsilon}_n = (n/\log n)^{-\alpha/(2\alpha+1)}$, $M_n=n^{1/t_3}$ and $r=\infty$,
then the posterior distribution contracts at the rate $\epsilon_n = n^{-\alpha/(2\alpha+1)} (\log n)^{\alpha /(2\alpha+1)+(1-t_2)/2} $ at $p_0$ with respect to the Hellinger distance.
\end{proof}

\begin{remark}\rm
The commonly used exponential link function trivially satisfies the first requirement that $\log \Psi$ is Lipschitz continuous. The identity link function, used for MCMC-free calculation in Section~3.3 satisfies $c(w)=\int w(x)dx=1$ as $w$ is a probability density, and hence the condition that $c(w)$ bounded away from zero trivially holds.
\end{remark}

\subsection{Anisotropic multivariate density estimation}\label{mulden}
We extend univariate density estimation to the multivariate situation by considering estimating an $s$-dimensional density function on $(0,1)^s$. We induce a prior on the density using through the relation $p_{w} \propto \Psi(\bm{\theta}^T \bm{\xi} )$ with $\bm{\xi}$ chosen as the tensor-product B-spline basis of order $q$. The true density $p_0$ is allowed to have different smoothness levels at different directions. More precisely, we define an anisotropic H\"older smoothness class by
     \begin{eqnarray*}
    \mathcal{C}^{\bm{\alpha}}(0,1)^{s}= \left\{f(x_1,\ldots,x_s): \left\|\frac{\partial^{\sum_{k=1}^s l_k}f}{\partial x_1^{l_1} \cdots \partial x_s^{l_s}} \right\|_{\infty} < \infty,~ 0 \leq l_k \leq \alpha_k,~ k=1,\ldots,s,~\sum_{k=1}^s l_k/\alpha_k<1.\right\}
    \end{eqnarray*}
 for some smoothness parameter $\bm{\alpha} = (\alpha_1,\ldots,\alpha_s)$, which are integers not greater than $q$. Let $J(1),\ldots,J(s)$ be the number of basis functions for individual $s$ directions and define $J$ as their products. Given $\Psi^{-1} (p_0) \in \mathcal{C}^{\bm{\alpha}}(0,1)^{s}$, the approximation error is of the order $\sum_{k=1}^s J(k)^{-\alpha_k}$ according to Theorem 12.7 of \citet{Schumaker2007}. Hence for the best balancing of the approximation error,we choose $J_{n}(k) = \bar\epsilon_n^{-1/\alpha_k}$ and $\bar{J}_n = \prod_{k=1}^s J_{n}(k) = \bar\epsilon_n^{-s/\alpha^*}$, where $\alpha^* = s/(\sum_{k=1}^s \alpha_k^{-1})$ is the harmonic mean of $\alpha_1,\ldots,\alpha_s$, and $\bar{\epsilon}_n$  is to be chosen to match $\bar{\epsilon}_n^{-s/\alpha^*}\log n$ with $n\bar{\epsilon}_n^2$. Applying Theorem \ref{thm:t1} with $\bar{\epsilon}_n = (n/\log n)^{-\alpha^*/(2\alpha^*+s)}$, $J_n = n^{s/(2\alpha^*+s)} (\log n)^{(2\alpha^*)/(2\alpha^*+s) -t_2}$, $M_n=n^{1/t_3}$, $a_4=1/2$ and $r=\infty$, the posterior distribution contracts at $p_0$ with respect to the Hellinger distance at the rate $\epsilon_n = n^{-\alpha^*/(2\alpha^*+s)} (\log n)^{\alpha^* /(2\alpha^* +s)+(1-t_2)/2} $. Essentially the same rate is also obtained in \citet{Shen2013} (with a different logarithmic factor) using a Dirichlet mixture of normal prior.

\subsection{MCMC-free computation}
Next, we describe an MCMC-free calculation technique for the univariate density estimation using normalized B-splines $\{B_1^*,\ldots,B_J^*\}$ as the basis; see Appendix.
By part (d) in Lemma \ref{lemma:l00}, we can restrict the coefficients $\bm{\theta}$ to a $J$-dimensional simplex $\Delta_J$ and maintain the same approximation rate. We put a Dirichlet prior on $\bm{\theta} \sim \text{Dir}(a_1,a_2,\ldots,a_J)$ for any $J \in \mathbb{N}$. Finally, we assign a prior $\Pi$ on $J$. Thus a prior on the density $p$ is induced. Given the observations $\bm{X}=(X_1,\ldots,X_n)$ and a fixed dimension $J$, the posterior density of $\bm{\theta}$ is a mixture of Dirichlet distribution:
\begin{eqnarray}
p(\bm{\theta} |\bm{X},J) & \propto & \prod_{k=1}^J \theta_k^{a_k-1} \prod_{i=1}^n \big\{\sum_{k=1}^J \theta_k B_k^*(X_i)\big\} = \sum_{i_1=1}^J \cdots \sum_{i_n=1}^J \prod_{k=1}^J \theta_k^{a_k-1} \prod_{s=1}^n \theta_{i_s} B_{i_s}^*(X_s). \nonumber
\end{eqnarray}
Using $p(J,\bm{\theta}|\bm{X}) \propto p(\bm{X}|J,\bm{\theta}) \Pi(\bm{\theta}|J) \Pi(J)$,
 the posterior mean of $p$ at a point $x$ is
\begin{eqnarray}\label{eq:e1323}
\lefteqn{\displaystyle{\frac{\sum_{j=1}^{\infty} \int_{\bm{\theta}} p(x) p(\bm{X}|J=j,\bm{\theta}) \Pi(\bm{\theta}|J=j) \Pi(J=j) d\bm{\theta} }
{\sum_{j=1}^{\infty} \int_{\bm{\theta}} p(\bm{X}|J=j,\bm{\theta}) \Pi(\bm{\theta}|J=j) \Pi(J=j) d\bm{\theta}} }} \nonumber \\
&=\displaystyle{\frac{\sum_{j=1}^{\infty} \Pi(j) \sum_{i_0=1}^j \sum_{i_1=1}^j \cdots \sum_{i_n=1}^j \int_{\bm{\theta} \in \Delta_j} \prod_{k=1}^j \theta_k^{a_k-1} \prod_{s=0}^n \theta_{i_s} B_{i_s}^*(X_s) d\bm{\theta} }{ \sum_{j=1}^{\infty} \Pi(j) \sum_{i_1=1}^j \cdots \sum_{i_n=1}^j \int_{\bm{\theta} \in \Delta_j} \prod_{k=1}^j \theta_k^{a_k-1} \prod_{s=1}^n \theta_{i_s} B_{i_s}^*(X_s) d\bm{\theta} }}, &
\end{eqnarray}
where $X_0$ stands for $x$. Define $I_{k,j,0}^{\bm{i}}=\sum_{s=0}^n \Ind \{i_s=k\}$ and $I_{k,j,1}^{\bm{i}}= \sum_{s=1}^n \Ind\{i_s=k\}$. Then the expression in \eqref{eq:e1323} can be simplified to
\begin{eqnarray}\label{eq:e901}
 \frac{ \displaystyle{\sum_{j=1}^{\infty} \Pi(j) \sum_{i_0=1}^j \sum_{i_1=1}^j \cdots \sum_{i_n=1}^j \prod_{k=1}^j \Gamma(a_k + I_{k,j,0}^{\bm{i}}) \prod_{s=0}^n B_{i_s}^*(X_s) / \Gamma \Big(\sum_{i=1}^j a_i + n +1 \Big)}} { \displaystyle{\sum_{j=1}^{\infty} \Pi(j)  \sum_{i_1=1}^j \cdots \sum_{i_n=1}^j \prod_{k=1}^j \Gamma(a_k + I_{k,j,1}^{\bm{i}}) \prod_{s=1}^n  B_{i_s}^*(X_s) / \Gamma \Big(\sum_{i=1}^j a_i + n \Big) }}.
\end{eqnarray}
A basis function takes nonzero values only at $q$ intervals, so the calculation involves a multiple of $q^{n+1}$ steps. More details are given in Section \ref{sec9}. Similar expressions can be obtained for other posterior moments, in particular, for the posterior variance.

Note that if $q=1$, the sums over indices $i_1,\ldots,i_n$ in \eqref{eq:e901} will be redundant, leading to a histogram estimate whose bin length and weights are posterior averaged. The B-spline random series prior can also be viewed as a kernel mixture prior, where the kernel is a B-spline function indexed by a discrete parameter.

For multivariate situation, MCMC-free computational techniques can be developed in a similar way using tensor products of normalized B-splines as the basis and a Dirichlet prior on the corresponding coefficients. The approximation property established in the last part of Lemma~\ref{lemma:l00*} justifies restricting the coefficients on the simplex.

\section{Regression models}\label{sec:reg}
In this section, we consider several nonparametric regression problems including regression with additive Gaussian errors, binary regression, Poisson regression and functional regression. In these cases, we allow the covariates be either fixed or random and show how Theorem \ref{thm:t1} can be used to derive contraction rates. The techniques also apply for multivariate analogs of these regression problems using the tensor-product B-spline basis as in Subsection \ref{mulden}. 

For fixed covariates $\bm{Z}$, define the empirical measure $\mathbb{P}_n^Z = n^{-1} \sum_{i=1}^n \delta_{Z_i}$, and $\|\cdot \|_{2,n}$ as the norm on $L_2(\mathbb{P}_n^Z)$.

\subsection{Nonparametric regression with Gaussian errors}\label{examplenormreg}
We consider a regression model with additive error $X_i = f(Z_i)+\varepsilon_i$, where $\varepsilon_i \stackrel{\text{iid}}{\sim} \text{N}(0,\sigma^2)$, $Z_1,\ldots,Z_n \in (0,1)$.  For ease of illustration, we first consider known $\sigma$ and fixed covariates; the modification necessary for unknown $\sigma$ and random covariates is outlined in Remark~\ref{sigmarm}. 

\begin{corollary}
Suppose that the true regression function $f_0 \in \mathcal{C}^{\alpha}(0,1)$ and the prior satisfies Conditions (A1) and (A2).
Given fixed covariates, assume that the basis $\bm{\xi}$ satisfies \eqref{eq:eth1} and \eqref{eq:eth2} with $r=\infty$. Then the posterior of $f$ contracts at the rate
$\epsilon_n = n^{-\alpha/(2\alpha+1)} (\log n)^{\alpha/(2\alpha+1)+(1-t_2)/2} $ relative to $\|\cdot\|_{2,n}$ at $f_0$.
\end{corollary}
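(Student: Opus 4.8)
The plan is to obtain the corollary as a direct application of Theorem~\ref{thm:t1} with $r=\infty$, so that the only substantive work is to translate the general conditions \eqref{eq:rela} and \eqref{eq:a114} into explicit estimates for the Gaussian shift family and then to reconcile the $\rho_n$-conclusion of that theorem with the $\|\cdot\|_{2,n}$-statement asserted here. Conditions (A1), (A2), \eqref{eq:eth1} and \eqref{eq:eth2} are assumed, so I would first fix $p_{i,w}$ to be the $\mathrm{N}(w(Z_i),\sigma^2)$ density with $\sigma$ known, and take the same sequences as in the density case, $\bar J_n = (n/\log n)^{1/(2\alpha+1)}$, $J_n = n^{1/(2\alpha+1)}(\log n)^{2\alpha/(2\alpha+1)-t_2}$, $M_n = n^{1/t_3}$ and $\bar\epsilon_n = (n/\log n)^{-\alpha/(2\alpha+1)}$, which satisfy the rate equations \eqref{eq:e22}--\eqref{eq:t333} exactly as before.

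Next I would verify \eqref{eq:a114}. For two normals with a common variance, $K(p_{i,w_0},p_{i,w}) = (w_0(Z_i)-w(Z_i))^2/(2\sigma^2)$ and $V(p_{i,w_0},p_{i,w}) = (w_0(Z_i)-w(Z_i))^2/\sigma^2 + (w_0(Z_i)-w(Z_i))^4/(4\sigma^4)$, so averaging over $i$ gives $n^{-1}\sum_i K = \|w_0-w\|_{2,n}^2/(2\sigma^2)$ and a matching expression for $V$ in which the quartic term is negligible once $\|w_0-w\|_\infty$ is small. Since $\|w_0-w\|_{2,n}\le\|w_0-w\|_\infty$, both averages are $\lesssim \|w_0-w\|_\infty^2$, which is \eqref{eq:a114} with $r=\infty$. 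For \eqref{eq:rela} I would use the Bhattacharyya identity $h^2(p_{i,w_1},p_{i,w_2}) = 2\bigl(1-\exp\{-(w_1(Z_i)-w_2(Z_i))^2/(8\sigma^2)\}\bigr)\le (w_1(Z_i)-w_2(Z_i))^2/(4\sigma^2)$; averaging yields $\rho_n(w_1,w_2)\le (2\sigma)^{-1}\|w_1-w_2\|_{2,n}\le(2\sigma)^{-1}\|w_1-w_2\|_\infty$, so \eqref{eq:rela} holds with $a_3=0$ and $a_4=1$. Theorem~\ref{thm:t1} then yields contraction at $\epsilon_n = n^{-\alpha/(2\alpha+1)}(\log n)^{\alpha/(2\alpha+1)+(1-t_2)/2}$ with respect to $\rho_n$.

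The hard part is the final reconciliation: Theorem~\ref{thm:t1} delivers contraction in $\rho_n$, whereas the corollary asserts it in $\|\cdot\|_{2,n}$, and these are not interchangeable because the squared Hellinger distance saturates at $2$ as $|w(Z_i)-w_0(Z_i)|\to\infty$. Consequently the reverse comparison $\|w-w_0\|_{2,n}\lesssim\rho_n(w,w_0)$ fails pointwise for large discrepancies, and since the posterior is only known to concentrate on the sieve $\mathcal{W}_{J_n,M_n}$, on which $\|w-w_0\|_\infty$ is merely polynomially bounded, a crude truncation of the saturating terms would inflate the rate by that polynomial factor. The route I would take to circumvent this is to bypass the generic Hellinger metric and verify the \citet{Ghosal20071} testing requirement directly for $\|\cdot\|_{2,n}$: in the Gaussian shift model the likelihood-ratio (equivalently, a linear) test separating $w_0$ from a $\|\cdot\|_{2,n}$-ball of radius $\tau$ has both error probabilities bounded by $\exp\{-c\,n\tau^2\}$ uniformly in $\tau$, with no saturation. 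The entropy output \eqref{eq:e309} and the prior-mass outputs \eqref{eq:e310}--\eqref{eq:e311} of Theorem~\ref{thm:t0} are stated in terms of $\|\cdot\|_\infty$ and the Kullback--Leibler neighbourhood, and because $\|\cdot\|_{2,n}\le\|\cdot\|_\infty$ the $\|\cdot\|_{2,n}$-entropy is no larger; feeding these together with the uniform Gaussian test into the general non-i.i.d. contraction theorem gives contraction directly in $\|\cdot\|_{2,n}$ at the same rate $\epsilon_n$.
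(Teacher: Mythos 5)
Your proposal is correct and, in substance, takes the same route as the paper: both arguments hinge on applying the general non-i.i.d.\ contraction theorem of \citet{Ghosal20071} with $d_n=\|\cdot\|_{2,n}$ itself rather than $\rho_n$, justified by the existence of exponentially powerful (likelihood-ratio) tests for $\|\cdot\|_{2,n}$-balls in the Gaussian fixed-design model, combined with the bound $\max\{n^{-1}\sum_i K, n^{-1}\sum_i V\}\lesssim \|f-f_0\|_{2,n}^2/\sigma^2\le \|f-f_0\|_\infty^2/\sigma^2$ and the same choices of $\bar J_n$, $J_n$, $M_n$, $\bar\epsilon_n$. The only difference is presentational: the paper compresses your explicit discussion of Hellinger saturation and the direct testing verification into a one-line appeal to Lemma 2 and Section 7.2 of \citet{Ghosal20071}, which is precisely the content you spell out.
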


\begin{proof}
Let $P_{f,i}$ be the normal measure with mean $f(Z_i)$ and variance $\sigma^2$. Then the Hellinger distance between $P_{f_1,i}$ and $P_{f_2,i}$
is of the order of the $|f_1(Z_i)-f_2(Z_i)|$ when one is, and hence both are,
small. Hence the conclusions of Lemma 2 of \citet{Ghosal20071} hold (with different
constants) for the distance $\|\cdot\|_{2,n}$. This implies that to compute entropy we can work with $\|\cdot\|_{2,n}$ instead of $\rho_n$. Using the arguments in Section 7.2 of \citet{Ghosal20071}, we get
\begin{equation}\label{eq:e703}
\max \Big\{ n^{-1} \sum_{i=1}^n K(P_{f_0,i}, P_{f,i}), n^{-1} \sum_{i=1}^n V(P_{f_0,i}, P_{f,i}) \Big\} \leq \|f_0 - f\|_{2,n}^2 /\sigma^2 \leq \|f_0 - f\|_{\infty}^2/\sigma^2.
\end{equation}

Clearly, Condition \eqref{eq:rela} holds for $\|\cdot\|_{2,n}$ with $r=\infty$. Assuming that the basis $\bm{\xi}$ satisfies \eqref{eq:eth1} and \eqref{eq:eth2} with $r=\infty$ and choosing $\bar{J}_n = (n/\log n)^{1/(2\alpha+1)}$, $\bar{\epsilon}_n = (n/\log n)^{-\alpha/(2\alpha+1)}$, $J_n = n^{1/(2\alpha+1)} (\log n)^{2\alpha/(2\alpha+1) -t_2}$, $r=\infty$ and $M_n=n^{1/t_3}$, then we obtain the posterior contraction rate $\epsilon_n = n^{-\alpha/(2\alpha+1)} (\log n)^{\alpha/(2\alpha+1)+(1-t_2)/2} $ relative to $\|\cdot\|_{2,n}$.
\end{proof}

\begin{remark}\rm\label{sigmarm}
For random covariates $Z_1,\ldots,Z_n \sim G$, define $L_{2,G}$ as the $L_2$-distance with respect to the probability measure $G$. We assume that $G$ has a density $g$ bounded and bounded away from zero, and $\bm{\xi}$ satisfies \eqref{eq:eth1} and \eqref{eq:eth2} with $r=2$. Then $\|f_1 - f_2 \|_{2,G}$ is equivalent to $\|f_1 - f_2\|_{2}$, and hence can be used interchangeably in entropy calculations and bounding prior concentration and posterior contraction rates. Alternatively without any conditions on $G$, we can assume the basis has the $L_{\infty}$-approximation property with the same rate and bound $\|\cdot\|_{2,G}$ by $\|\cdot\|_{\infty}$. Hence by applying Theorem \ref{thm:t1} in the same way with $r=\infty$, we obtain the same rate with respect to $\| \cdot \|_{2,G}$.

When $\sigma$ is unknown, we assign a prior (independent of other parameters) on it. If the prior density is positive throughout, and has exponential tail near zero and polynomial tail near infinity, then a sieve $(n^{-C_1},\exp\{C_2 n \epsilon_n^2\} )$ with sufficiently large $C_1,C_2$ will satisfy the conditions in Theorem \ref{thm:t1}. Note that the popular inverse gamma prior on $\sigma^2$ (or on any positive power of $\sigma$) satisfies the requirements.
\end{remark}

\subsection{Nonparametric binary regression}
Assume that we have $n$ independent observations $(Z_1,X_1),\ldots,(Z_n,X_n)$ from a binary regression model $\mathrm{P}(X=1|Z=z)=1-\text{P}(X=0|Z=z)=f_0(z)$, where $X$ takes values in $\{0,1\}$ and $Z$ is either a fixed or a random covariate in some domain $\mathcal{Z}$. Given a link function $\Psi: \mathcal{Z} \rightarrow (0,1)$, we can construct a random series prior on the regression function $f_0$ using a basis $\bm{\xi}$ as $f_{\bm{\theta}}(z) = \Psi \{\bm{\theta}^T \bm{\xi}(z)\}$. Commonly, a cumulative distribution function on $\mathbb{R}$ such as the logit or probit function is chosen as the link function and the coefficient vector $\bm{\theta}$ can take any values in $\mathbb{R}^J$. Then any basis with approximation property for the H\"{o}lder class may be used.


\begin{corollary}
Suppose that the true classification function $f_0$ is bounded away from $0$ and $1$, and satisfies $w_0=\Psi^{-1} (f_0) \in \mathcal{C}^{\alpha}(0,1)$. Given fixed covariates, and that the prior satisfies Conditions (A1) and (A2). Assume that the basis $\bm{\xi}$ satisfy \eqref{eq:eth1} and \eqref{eq:eth2} with $r=\infty$, and the link function $\Psi$ is Lipschitz continuous. Then the posterior of $f$ contracts at the rate $\epsilon_n = n^{-\alpha/(2\alpha+1)} (\log n)^{\alpha/(2\alpha+1)+(1-t_2)/2} $ relative to $\|\cdot\|_{2,n}$ at $f_0$.
\end{corollary}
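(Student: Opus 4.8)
The plan is to reduce the corollary to a direct application of Theorem \ref{thm:t1} with $r=\infty$, in exactly the manner of the density-estimation and Gaussian-regression corollaries above. The per-observation model is Bernoulli: $p_{i,f}$ is the $\{0,1\}$-law with success probability $f(Z_i)=\Psi\{\bm{\theta}^T\bm{\xi}(Z_i)\}$, and the true curve is $f_0=\Psi(w_0)$ with $w_0\in\mathcal{C}^\alpha(0,1)$. Conditions (A1)--(A2), the bias bound \eqref{eq:eth1}, and the Lipschitz/entropy bound \eqref{eq:eth2} are assumed, so the only problem-specific inputs I must supply are the metric-comparison condition \eqref{eq:rela}, the information bound \eqref{eq:a114}, and the identification of $\|\cdot\|_{2,n}$ as the contraction metric; the sequences $\bar J_n,J_n,\bar\epsilon_n,M_n$ will be the same as in Corollary~3.

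First I would record two elementary facts about the Hellinger distance between Bernoulli laws, namely $h^2(\mathrm{Bern}(p),\mathrm{Bern}(q))=(\sqrt p-\sqrt q)^2+(\sqrt{1-p}-\sqrt{1-q})^2$. On the one hand $h^2\le\|\mathrm{Bern}(p)-\mathrm{Bern}(q)\|_1=2|p-q|$, and on the other $h^2\ge(\sqrt p-\sqrt q)^2=(p-q)^2/(\sqrt p+\sqrt q)^2\ge(p-q)^2/4$. The lower bound gives, with no restriction on the functions, $\rho_n(w,w_0)\ge\tfrac12\|f-f_0\|_{2,n}$, so posterior contraction in $\rho_n$ at rate $\epsilon_n$ yields contraction in $\|\cdot\|_{2,n}$ at the same rate, disposing of the contraction-metric claim. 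The upper bound together with the Lipschitz continuity of $\Psi$ gives $\rho_n^2(w_1,w_2)\le 2\|f_{w_1}-f_{w_2}\|_\infty\le 2L\|w_1-w_2\|_\infty$, i.e.\ $\rho_n(w_1,w_2)\lesssim\|w_1-w_2\|_\infty^{1/2}$, which is \eqref{eq:rela} with $a_3=0$ and $a_4=1/2$; importantly this estimate holds on all of $\mathcal{W}_{J_n,M_n}$, where $w$ and hence $f=\Psi(w)$ need not stay away from the endpoints $0,1$.

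For the information bound \eqref{eq:a114} I only need control of $K$ and $V$ on a small $\|\cdot\|_\infty$-neighborhood of $w_0$. Since $f_0$ is bounded away from $0$ and $1$, $w_0=\Psi^{-1}(f_0)$ is bounded, and for $\|w-w_0\|_\infty$ small the continuity of $\Psi$ forces $f=\Psi(w)$ into a fixed subinterval $[c,1-c]\subset(0,1)$; the likelihood ratios $p_0/p$ and $(1-p_0)/(1-p)$ are then bounded. Invoking Lemma~8 of \citet{Ghosal2007} exactly as in the density-estimation corollary gives $K(p_{i,f_0},p_{i,f}),V(p_{i,f_0},p_{i,f})\lesssim h^2(p_{i,f_0},p_{i,f})\lesssim|f_0(Z_i)-f(Z_i)|^2$ on $[c,1-c]$; averaging over $i$ and applying the Lipschitz bound $|f_0-f|\lesssim|w_0-w|$ yields $n^{-1}\sum K,\,n^{-1}\sum V\lesssim\|f_0-f\|_{2,n}^2\le\|w_0-w\|_\infty^2$, which is \eqref{eq:a114} with $r=\infty$.

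Finally I would verify that the choices $\bar J_n=(n/\log n)^{1/(2\alpha+1)}$, $J_n=n^{1/(2\alpha+1)}(\log n)^{2\alpha/(2\alpha+1)-t_2}$, $\bar\epsilon_n=(n/\log n)^{-\alpha/(2\alpha+1)}$, $M_n=n^{1/t_3}$ satisfy the rate equations \eqref{eq:e22}--\eqref{eq:t333} verbatim as in Corollary~3, and then invoke Theorem~\ref{thm:t1} to obtain contraction at $\epsilon_n=n^{-\alpha/(2\alpha+1)}(\log n)^{\alpha/(2\alpha+1)+(1-t_2)/2}$. The only real subtlety, which is conceptual rather than computational, is to establish the metric comparison \eqref{eq:rela} through the dimension-free estimate $h^2\le 2|p-q|$ (giving $a_4=1/2$) rather than the sharper local equivalence $h\asymp|f_1-f_2|$: the latter needs $f$ bounded away from $\{0,1\}$, which can fail for the large coefficient vectors permitted on the sieve $\mathcal{W}_{J_n,M_n}$. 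Boundedness of $f$ is used only locally near $w_0$, where it is automatic, and it is precisely there that the information bound \eqref{eq:a114} is required.
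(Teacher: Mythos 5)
Your proposal is correct and follows the paper's route in all essentials: you verify \eqref{eq:rela} with $a_3=0$, $a_4=1/2$ via $h^2\le\|\cdot\|_1\lesssim\|w_1-w_2\|_\infty$ (using the Lipschitz continuity of $\Psi$), you verify the Kullback--Leibler bound \eqref{eq:a114} only locally near $w_0$ using the boundedness of $f_0$ away from $0$ and $1$, and you then apply Theorem \ref{thm:t1} with the same sequences as in the density-estimation corollary. The one place you genuinely diverge is the final passage from $\rho_n$ to $\|\cdot\|_{2,n}$: the paper asserts that the two metrics are \emph{equivalent} ``by Taylor's expansion of the squared Hellinger distance in a binomial model,'' an equivalence that is only local, since it requires the success probabilities to stay away from $0$ and $1$ --- something that need not hold for the functions permitted on the sieve $\mathcal{W}_{J_n,M_n}$. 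You instead use the one-sided, fully global inequality $h^2(\text{Bern}(p),\text{Bern}(q))\ge(\sqrt p-\sqrt q)^2=(p-q)^2/(\sqrt p+\sqrt q)^2\ge(p-q)^2/4$, and correctly observe that only this direction is needed to transfer the contraction rate from $\rho_n$ to $\|\cdot\|_{2,n}$. That is a modest but real gain in rigor over the paper's wording. The remaining differences are cosmetic: you make explicit the appeal to Lemma 8 of \citet{Ghosal2007} for \eqref{eq:a114}, which the paper leaves implicit in this corollary, and you take $\bar\epsilon_n=(n/\log n)^{-\alpha/(2\alpha+1)}$ and a $t_2$-exponent in $J_n$ where the paper's proof writes $\bar\epsilon_n=n^{-\alpha/(2\alpha+1)}(\log n)^{(\alpha+1)/(2\alpha+1)}$ and a $t_1$-exponent; both sets of sequences satisfy \eqref{eq:e22}--\eqref{eq:t333}, so the conclusion is unaffected.
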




\begin{proof}
Define $p_{w} = \Psi(w)^x (1-\Psi(w))^{1-x}$, note that by the Lipschitz continuity of $\Psi$,
\begin{eqnarray*}
h^2(p_{w_1}, p_{w_2})\le \|p_{w_1} - p_{w_2} \|_1 = 2 \|\Psi(w_1) - \Psi(w_2)\|_{\infty}\lesssim \|w_1-w_2\|_\iy, \\
\max\{K(p_{w_0},p_w),V(p_{w_0},p_w) \}  \lesssim \|\Psi(w) - \Psi(w_0)\|_{\infty}^2 \lesssim \|w - w_0\|_{\infty}^2,
\end{eqnarray*}
so the relation \eqref{eq:rela} holds with $a_4=1/2$. Now we may apply Theorem \ref{thm:t1} with $\bar{J}_n = (n/ \log n)^{1/(2\alpha+1)}$, $M_n=n^{1/t_3}$, $J_n = n^{1/(2\alpha+1)} (\log n)^{2\alpha/(2\alpha+1)  -t_1}$ and $\bar{\epsilon}_n = n^{-\alpha/(2\alpha+1)} (\log n)^{(\alpha+1)/(2\alpha+1)}$, then the posterior distribution contracts at the rate $\epsilon_n = n^{-\alpha/(2\alpha+1)} (\log n)^{\alpha /(2\alpha+1)+(1-t_2)/2} $ relative to $\rho_n$. By Taylor's expansion of the squared Hellinger distance in a binomial model, it is easy to see that $\rho_n$ is equivalent with the $\|\cdot\|_{2,n}$-distance on $f$.
\end{proof}

For random covariates $Z_1,\ldots,Z_n \sim G$, when $G$ has a density $g$ bounded and bounded away from zero, the same conclusion can be made in terms of the $L_2$-distance on $f$, or more generally, with respect to the $L_2(G)$-distance without any additional conditions.

When specifically the B-splines basis is used, the link function $\Psi$ can be chosen to be the identity function in view of part (c) of Lemma \ref{lemma:l00}. The expressions then simplify significantly if we use beta priors $\theta_i \stackrel{\text{ind}}{\sim} \text{Beta}(a_i,b_i)$ for some positive numbers $a_i$ and $b_i$. 

\subsection{Nonparametric Poisson regression}
Consider a Poisson regression model $X_i \stackrel{\text{ind}}{\sim} \text{Poi}\{f(Z_i)\}$, where $f$ is an unknown monotonic function and $Z$ is a covariate. For convenience, we assume that $Z$ takes values in $(0,1)$. Using a random series expansion, $f$ can be modeled through a link function $f(z)=\Psi(\bm{\theta}^T\bm{\xi})(z)$.

\begin{corollary}
Suppose that $\Psi^{-1}(f_0) \in \mathcal{C}^{\alpha}(0,1)$ and $f_0$ is bounded away from zero and infinity. Let the prior satisfy Conditions (A1) and (A2). Assume that the basis $\bm{\xi}$ satisfies \eqref{eq:eth1} and \eqref{eq:eth2} with $r=\infty$, and the link function $\Psi$ is monotonic and Lipschitz continuous on $(0,1)$ and $\sqrt{\Psi}$ is Lipschitz continuous on $[1,\iy)$.  Then the posterior of $f$ contracts at the rate $\epsilon_n = n^{-\alpha/(2\alpha+1)} (\log n)^{\alpha/(2\alpha+1)+(1-t_2)/2} $ relative to root-average squared Hellinger distance $\rho_n$ at $f_0$.
\end{corollary}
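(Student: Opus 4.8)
The plan is to apply Theorem \ref{thm:t1} with $r=\infty$, exactly as in the preceding corollaries, so that the only genuine work is to establish the two metric relations \eqref{eq:rela} and \eqref{eq:a114} for the Poisson model; the rate then follows from the same choices $\bar J_n=(n/\log n)^{1/(2\alpha+1)}$, $J_n=n^{1/(2\alpha+1)}(\log n)^{2\alpha/(2\alpha+1)-t_2}$, $\bar\epsilon_n=(n/\log n)^{-\alpha/(2\alpha+1)}$ and $M_n=n^{1/t_3}$. Write $p_{i,w}$ for the $\mathrm{Poi}\{\Psi(w(Z_i))\}$ mass function and set $\lambda_j(z)=\Psi(w_j(z))$. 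The starting point is the exact Hellinger affinity of two Poisson laws, $\sum_x\sqrt{p_{\lambda_1}(x)p_{\lambda_2}(x)}=\exp\{-(\sqrt{\lambda_1}-\sqrt{\lambda_2})^2/2\}$, which yields $h^2(p_{\lambda_1},p_{\lambda_2})\le(\sqrt{\lambda_1}-\sqrt{\lambda_2})^2$ via $1-e^{-x}\le x$, so that $\rho_n^2(w_1,w_2)\le n^{-1}\sum_i(\sqrt{\Psi(w_1(Z_i))}-\sqrt{\Psi(w_2(Z_i))})^2$.

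To verify \eqref{eq:rela} I would bound this sum over the sieve by splitting according to the two Lipschitz regimes, using the elementary inequality $(\sqrt a-\sqrt b)^2\le|a-b|$. At points where both $\lambda$-values exceed $1$, apply the Lipschitz continuity of $\sqrt\Psi$ on $[1,\infty)$ directly; where both lie in $(0,1)$, use $(\sqrt{\lambda_1}-\sqrt{\lambda_2})^2\le|\lambda_1-\lambda_2|=|\Psi(w_1)-\Psi(w_2)|$ together with the Lipschitz continuity of $\Psi$ on $(0,1)$; the mixed case splits at the threshold $w^\ast$ with $\Psi(w^\ast)=1$, which is well defined by monotonicity. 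Each summand is then $\lesssim\|w_1-w_2\|_\infty+\|w_1-w_2\|_\infty^2$, so for small $\|w_1-w_2\|_\infty$ one obtains $\rho_n(w_1,w_2)\lesssim\|w_1-w_2\|_\infty^{1/2}$, i.e.\ \eqref{eq:rela} with $a_3=0$ and $a_4=1/2$. This control holds uniformly over $\mathcal{W}_{J_n,M_n}$, where $\Psi(w)$ may be large, precisely because the global behaviour for large arguments is carried by $\sqrt\Psi$ rather than $\Psi$.

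For \eqref{eq:a114} I would use the Poisson divergence $K(p_{\lambda_0},p_\lambda)=\lambda_0\log(\lambda_0/\lambda)-\lambda_0+\lambda$ and the analogous bounded expression for $V$. A two-term Taylor expansion about $\lambda=\lambda_0$ gives $K,V\lesssim(\lambda-\lambda_0)^2$ as long as $\lambda,\lambda_0$ stay bounded away from $0$ and $\infty$, which is exactly the regime of \eqref{eq:a114}: there $\|w-w_0\|_\infty$ is small and, since $f_0=\Psi(w_0)$ is bounded away from $0$ and $\infty$ and $w_0(\cdot)$ ranges over a compact set, both $\lambda_0(\cdot)$ and $\lambda(\cdot)$ lie in a fixed compact subinterval of $(0,\infty)$. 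On such a set $\Psi$ is Lipschitz—either by the $(0,1)$ assumption, or because $\sqrt\Psi$ Lipschitz and bounded makes $\Psi=(\sqrt\Psi)^2$ Lipschitz on a bounded range—so $(\lambda-\lambda_0)^2\lesssim\|w-w_0\|_\infty^2$ and \eqref{eq:a114} holds with $r=\infty$.

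With \eqref{eq:eth1}--\eqref{eq:eth2} granted from the basis assumptions, Theorem \ref{thm:t1} then delivers the stated $\epsilon_n$ relative to $\rho_n$, which here is a genuine root-average squared Hellinger distance since the means $\Psi(w(Z_i))$ vary with $i$. I expect the main obstacle to be the interplay between the two Lipschitz regimes: the entropy relation must survive over the whole sieve where $\Psi(w)$ can be arbitrarily large—feasible only because the Hellinger distance sees $\sqrt\Psi$, for which Lipschitzness on $[1,\infty)$ is assumed and no lower bound on $\lambda$ is required—whereas the KL/V relation needs $\lambda$ bounded below, available only locally near $w_0$ through $f_0$ being bounded away from zero. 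Keeping these requirements separate, Hellinger globally via $\sqrt\Psi$ and KL locally via $\Psi$, is the crux, and the bound $(\sqrt a-\sqrt b)^2\le|a-b|$ is what lets the small-$\lambda$ contributions be controlled without any lower bound on $\lambda$.
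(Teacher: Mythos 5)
Your proposal is correct and follows essentially the same route as the paper's proof: the two-regime Lipschitz argument ($\sqrt{\Psi}$ on $[1,\infty)$ giving the Hellinger bound through square roots of the Poisson means, $\Psi$ on $(0,1)$ through the absolute-difference bound) to establish \eqref{eq:rela} with $a_3=0$, $a_4=1/2$, the local quadratic Kullback--Leibler bound near a positive Poisson parameter for \eqref{eq:a114}, and then an application of Theorem \ref{thm:t1}. You merely fill in details the paper leaves implicit (the exact Poisson Hellinger affinity, the mixed-regime split at the threshold $\Psi(w^\ast)=1$), and your sequence choices, carried over from the Gaussian-regression corollary, in fact yield the stated rate more directly than the paper's own choices, which contain extra logarithmic factors and an apparent $t_1$/$t_2$ slip.
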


\begin{proof}
In a Poisson model, the squared Hellinger distance is easily bounded by twice the square of the difference of the square roots of the parameters when the parameters are in $[1,\iy)$, while it is bounded by the $L_1$-distance which is further bounded by the absolute difference of the parameters when they lie in $(0,1)$. Hence by the Lipschitz continuity assumptions on $\Psi$, with the choice $r=\iy$, the condition  \eqref{eq:rela} holds with $a_3=0$ and $a_4=1/2$ when $\|w_1-w_2\|_\iy$ is small. In fact, it is sufficient to assume that the Lipschitz continuity condition on $\sqrt{\Psi}$ holds with the Lipschitz constant growing up to polynomially in $n$ on a sieve $\{\bm{\th}^T \bm{\xi} \ge 1$, $\|\bm{\th} \|_\iy\le n^c$ and $J_n\le n\}$. The Kullback-Leibler divergences in  Poisson model near a positive value of the parameter are bounded by a multiple of the square of the difference of parameter values, and a fixed constant can be chosen uniformly for all true parameter values lying in a compact subset of $(0,\iy)$. This leads to the verification of \eqref{eq:a114}.

For any of the discussed basis functions, an application of Theorem \ref{thm:t0} with the $L_{\infty}$-distance verifies the remaining conditions of  Theorem \ref{thm:t1} for $\bar{J}_n = (n/ \log n)^{1/(2\alpha+1)}$, $M_n=n^{1/t_3}$, $J_n = n^{1/(2\alpha+1)} (\log n)^{(2\alpha+2)/(2\alpha+1)  -t_1}$ and $\bar{\epsilon}_n = n^{-\alpha/(2\alpha+1)} (\log n)^{(\alpha+1)/(2\alpha+1)}$, then the posterior contraction rate is obtained as $\epsilon_n = n^{-\alpha/(2\alpha+1)} (\log n)^{\alpha/(2\alpha+1) +(1-t_1)/2}$ relative to $\rho_n$. 
\end{proof}

To reinterpret this contraction rate in terms of the more desirable $\|\cdot\|_{2,n}$-distance on $f$, we observe that these two are equivalent near the true regression function $f_0$ by its positivity and boundedness properties, provided that $f$ remains in an $L_\iy$-bounded set with high posterior probability for most samples drawn from the true distribution. This is obviously ensured if coefficients get a prior confined in a bounded set, but will also hold if the posterior is consistent for the $L_\iy$-distance on $f$.

For random covariates $Z_1,\ldots,Z_n \sim G$, the same contraction rate is obtained with respect to the Hellinger distance on the joint density of $(X,Z)$, and with respect to the $L_2(G)$-distance on $f$ under the aforementioned additional conditions.

If we use B-splines to form the basis, in view of Part (c) of Lemma \ref{lemma:l00}, we are allowed to restrict $\theta_j$ to positive values. By choosing the identity link, then it is possible to carry out MCMC-free computation by letting $\theta_i \stackrel{\text{ind}}{\sim} \text{Gamma}(a_i,b_i)$ for some positive numbers $a_i$ and $b_i$. The resulting prior satisfies all requirements for the posterior contraction rate obtained above.

\subsection{Functional regression model}
Spline functions are widely used to model functional data; see \citet{Cardot2003} for example. A rate of contraction result was obtained in \citet{Hall2007}. A Bayesian method based on splines was given by \citet{Goldsmith2011}. However, to the best of our knowledge, no results on posterior contraction rates for Bayesian methods are yet available. We consider two types of functional regression model. The first one assumes only the covariates $Z(t)$ and the effects $\beta(t)$ depend on time $t$. The second one allows functional observations $X(t)$. We can use any basis with general approximation properties for H\"{o}lder classes under the $L_2$-distance.

We first discuss the case of functional covariates with a scalar response. Suppose we observe i.i.d. copies  $(Z_1,X_1),\ldots,(Z_n,X_n)$ of $(Z,X)$, where $Z$ is a square integrable random function  defined on $(0,1)$ and $X$ is a scalar. A functional linear regression model can be formulated as follows:
\begin{eqnarray}\label{funcscalar}
X_i = \int_0^1 Z_i(t) \beta(t) dt + \varepsilon_i,
\end{eqnarray}
where $\beta(t)$ is the coefficient function we want to estimate, $\varepsilon_1,\ldots,\varepsilon_n \stackrel{\text{iid}}{\sim} \text{N}(0,\sigma^2)$. We consider $\sigma$ to be known; the more realistic case of unknown $\sigma$ can be treated following Remark~\ref{sigmarm}.

\begin{corollary}
Suppose that the true regression function $\beta \in \mathcal{C}^{\alpha}(0,1)$, $\mathrm{E} Z^2(t)$ is uniformly bounded away from $0$ and $\infty$ for every $t \in (0,1)$, and the basis satisfies \eqref{eq:eth1} and \eqref{eq:eth2} with $r=2$. Given the prior being constructed as in (A1) and (A2), the posterior of $f$ contracts in a rate $\epsilon_n = n^{-\alpha/(2\alpha+1)} (\log n)^{\alpha/(2\alpha+1)+(1-t_2)/2} $ relative to the $L_2$-distance.
\end{corollary}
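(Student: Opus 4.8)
The plan is to invoke Theorem \ref{thm:t1} with $r=2$, in direct parallel with the Gaussian-regression corollary. Since $\beta\in\mathcal{C}^{\alpha}(0,1)$ and the chosen basis has the $L_2$-approximation property $e(J)\asymp J^{-\alpha}$ together with \eqref{eq:eth2}, the same sequences $\bar{J}_n=(n/\log n)^{1/(2\alpha+1)}$, $J_n=n^{1/(2\alpha+1)}(\log n)^{2\alpha/(2\alpha+1)-t_2}$, $\bar{\epsilon}_n=(n/\log n)^{-\alpha/(2\alpha+1)}$ and $M_n=n^{1/t_3}$ satisfy the rate equations \eqref{eq:e22}--\eqref{eq:t333}. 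The only model-specific work is to verify \eqref{eq:rela} and \eqref{eq:a114}, that is, to bound the Hellinger distance $\rho_n$ and the Kullback--Leibler type divergences between two data densities by a power of $\|\beta_1-\beta_2\|_2$.

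First I would reduce these divergences to the predictor functional. For the model \eqref{funcscalar} with i.i.d.\ copies $(Z_i,X_i)$, $\rho_n$ is the Hellinger distance on the joint density of $(Z,X)$; as $Z$ has a fixed marginal not depending on $\beta$, this equals $\mathrm{E}_Z[h^2(\mathrm{N}(\mu_{\beta_1},\sigma^2),\mathrm{N}(\mu_{\beta_2},\sigma^2))]$ with $\mu_{\beta}=\int_0^1 Z(t)\beta(t)\,dt$. Two normals with common variance have squared Hellinger distance, and likewise their KL and $V$ divergences, bounded by a fixed multiple of the squared mean difference $\bigl(\int_0^1 Z(t)\Delta(t)\,dt\bigr)^2$, where $\Delta=\beta_1-\beta_2$. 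Taking the expectation over $Z$ bounds everything by $\mathrm{E}\bigl(\int_0^1 Z(t)\Delta(t)\,dt\bigr)^2$, and a single application of the Cauchy--Schwarz inequality in $t$ gives $\le\bigl(\int_0^1\mathrm{E}Z^2(t)\,dt\bigr)\|\Delta\|_2^2\lesssim\|\Delta\|_2^2$, since $\mathrm{E}Z^2(t)$ is uniformly bounded above. This verifies \eqref{eq:rela} with $a_3=0,\,a_4=1$ and \eqref{eq:a114} with $r=2$, so Theorem \ref{thm:t1} yields contraction at the rate $\epsilon_n$ with respect to $\rho_n$.

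The step I expect to be the genuine obstacle is passing from this $\rho_n$-rate back to the $L_2$-distance on $\beta$ asserted in the statement. Writing $T$ for the covariance operator of $Z$ with kernel $K(s,t)=\mathrm{E}[Z(s)Z(t)]$, the population version of $\rho_n^2$ is a multiple of the prediction seminorm $\langle\Delta,T\Delta\rangle=\mathrm{E}\bigl(\int_0^1 Z\Delta\bigr)^2$, and the bound just obtained is exactly $\langle\Delta,T\Delta\rangle\le C\|\Delta\|_2^2$. Converting contraction in this seminorm into contraction in $\|\cdot\|_2$ requires the reverse inequality $\langle\Delta,T\Delta\rangle\gtrsim\|\Delta\|_2^2$, i.e.\ $T$ bounded below. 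The diagonal hypothesis $\mathrm{E}Z^2(t)=K(t,t)\ge\underline{c}>0$ controls only the trace-type behaviour of $T$ and cannot by itself bound $T$ from below---a functional covariance operator is compact, with eigenvalues decaying to zero, so no uniform lower bound can hold on all of $L_2$. I would therefore restrict the comparison to the sieve, where $\Delta=\bm{c}^T\bm{\xi}$ lies in the $J_n$-dimensional span of the basis: there $\langle\Delta,T\Delta\rangle=\bm{c}^T\bm{G}_T\bm{c}$ and $\|\Delta\|_2^2=\bm{c}^T\bm{G}\bm{c}$ with Gram matrices $(\bm{G}_T)_{jk}=\langle\xi_j,T\xi_k\rangle$ and $\bm{G}_{jk}=\langle\xi_j,\xi_k\rangle$, and the two norms are equivalent precisely when the smallest eigenvalue of $\bm{G}_T$ relative to $\bm{G}$ stays bounded below. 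Making this quantitative---tracking how that spectral gap degrades with $J_n$, or else reading the conclusion in the prediction seminorm $\langle\cdot,T\cdot\rangle^{1/2}$ that $\rho_n$ naturally induces---is where the real content lies; the remainder is a transcription of the Gaussian-regression argument.
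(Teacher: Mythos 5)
Your first two paragraphs coincide with the paper's own proof: the paper likewise expands $\beta$ in the basis, bounds $\max\{K(P_{\beta_0},P_\beta),V(P_{\beta_0},P_\beta)\}\lesssim\sigma^{-2}\mathrm{E}_Z\bigl(\int_0^1 Z\Delta\bigr)^2\lesssim\sigma^{-2}\|\Delta\|_2^2$ by Cauchy--Schwarz, verifies \eqref{eq:rela} from the same bound, and applies Theorem \ref{thm:t1} with the same sequences $\bar{J}_n,J_n,\bar{\epsilon}_n,M_n$. (One shared minor glitch: for common-variance normals $V$ contains a quartic term in the mean difference, so the $V$ bound as written needs a fourth-moment condition on $\int_0^1 Z\Delta$; this is fixable and not the main issue.) Up to this point both arguments deliver contraction with respect to $\rho_n$, equivalently the prediction seminorm $\langle\Delta,T\Delta\rangle^{1/2}$.

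The obstacle you isolate in your final paragraph is genuine, and it is exactly the step the paper does not actually carry out: the paper converts $\rho_n$-contraction into $L_2$-contraction by asserting that ``the same argument used in the random covariates situation in Section \ref{examplenormreg} applies here,'' i.e., the equivalence $\|\cdot\|_{2,G}\asymp\|\cdot\|_2$ of Remark \ref{sigmarm}. That equivalence is valid when the design acts as a pointwise multiplier --- which is why the longitudinal model treated immediately afterwards is unproblematic, since there $\mathrm{E}\int_0^1 Z^2(t)\Delta^2(t)g(t)\,dt\asymp\|\Delta\|_2^2$ --- but in model \eqref{funcscalar} the design enters through the covariance operator $T$, which is trace class (as $\int_0^1\mathrm{E}Z^2(t)\,dt<\infty$) and hence compact, so $\langle\Delta,T\Delta\rangle\gtrsim\|\Delta\|_2^2$ cannot hold on all of $L_2$, exactly as you argue. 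A concrete witness: $Z(t)\equiv\zeta$ with $\mathrm{E}\zeta^2=1$ satisfies the hypothesis that $\mathrm{E}Z^2(t)$ is bounded away from $0$ and $\infty$, yet the likelihood depends on $\beta$ only through $\int_0^1\beta$, so no posterior can contract at $\beta_0$ in $L_2$. Consequently your refusal to complete that step is the correct call: under the stated hypotheses the conclusion is justified only for $\rho_n$ (the prediction seminorm), and upgrading it to $\|\cdot\|_2$ requires precisely the restricted-eigenvalue type assumption you sketch --- a lower bound on $\bm{G}_T$ relative to $\bm{G}$ over the sieve spans, with the rate paying for any decay of that bound in $J_n$. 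In short, your proposal is not missing an idea that the paper supplies; it correctly identifies a gap in the paper's own proof of this corollary.
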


\begin{proof}
We consider a basis expansion $\beta(t)=\sum_{k=1}^J \theta_k \xi_k(t)$. Denote $W_{ik}=\int_{0}^1 Z_i(t) \xi_k(t) dt$, then the model can be written as
$
X_i = \sum_{k=1}^J \theta_k W_{ik} + \varepsilon_i.
$
Define $P_{\beta}(\cdot|Z)$ as the normal measure with mean $\int_0^1 Z(t) \beta(t) dt$ and variance $\sigma^2$, and let $\text{E}_Z$ be the expectation with respect to the distribution of $Z$. Then we can bound $K(P_{\beta_0},P_{\beta})$ and $V(P_{\beta_0},P_{\beta})$ using Cauchy-Schwarz inequality:
\begin{eqnarray*}
 \max \big\{   K(P_{\beta_0}, P_{\beta}),  V(P_{\beta_0}, P_{\beta}) \big\}  \lesssim   \frac{1}{\sigma^2}\text{E}_{Z} \Big( \int_0^1 Z(t) \{\beta(t) - \beta_0(t)\} dt \Big)^2 \lesssim \frac{1}{\sigma^2} \|\beta-\beta_0\|_2^2.
\end{eqnarray*}
For \eqref{eq:rela}, note that the same argument used in random covariates situation in Section \ref{examplenormreg} applies here. Hence we can apply Theorem \ref{thm:t1} as in Section \ref{examplenormreg}. Then the posterior contracts at the rate $\epsilon_n = n^{-\alpha/(2\alpha+1)} (\log n)^{\alpha /(2\alpha+1)+(1-t_2)/2} $ relative to the $L_2$-distance.
\end{proof}

Next, we consider a longitudinal type of functional model:
\begin{eqnarray}
X_i(T_i)=Z_i(T_i) \beta(T_i) + \varepsilon_i.
\end{eqnarray}
For each object $i$, we observe its response $X_i$ at a random time $T_i \in (0,1)$ with a random covariate $Z_i$. We assume that $Z_1,\ldots,Z_n$ are i.i.d. copies of $Z$, $T_1,\ldots,T_n$ are i.i.d. copies of $T$, $\varepsilon_i \stackrel{\text{iid}}{\sim} \text{N}(0,\sigma^2)$, they are all independent of each other and $T$ has a density $g$ bounded and bounded away from zero on $(0,1)$. Again it suffices to treat $\sigma$ as known.

Suppose that the true regression function $\beta \in \mathcal{C}^{\alpha}(0,1)$ and $\mathrm{E} Z^2(t)$ are uniformly bounded away from $0$ and $\infty$ for every $t \in (0,1)$. Then again
\begin{eqnarray*}
 \max \big\{   K(P_{\beta_0}, P_{\beta}),  V(P_{\beta_0}, P_{\beta}) \big\} \lesssim   \frac{1}{\sigma^2} \text{E}  \int_0^1 Z^2(t) (\beta(t) - \beta_0(t))^2 g(t) dt \lesssim \frac{1}{\sigma^2}  \|\beta - \beta_0\|_2^2.
\end{eqnarray*}
Hence we obtain the same contraction rate $\epsilon_n$ if we use the same prior on $\beta$ as before. This rate coincides with the optimal rate obtained in \citet{Cai2011} within a logarithmic factor.

\section{Numerical examples}\label{sec9}
\subsection{Simulation}
We illustrate the use of conjugate prior structure as described in \eqref{eq:e1323} and \eqref{eq:e901} on density estimation problems. We consider two examples of the true density: Beta$(0.5,0.5)$, and a mixture density of exponential and a normal distribution:
\begin{eqnarray}\label{mixden}
f_0(x) \propto \frac{3}{4}3e^{-3x} + \frac{1}{4}\frac{\sqrt{32}}{\sqrt{\pi}} e^{-32(x-0.75)^2}.
\end{eqnarray}
For each density, we generate $n=20$, $50$, $100$ and $300$ samples and then implement the random series prior for $q=1$ and $q=3$. When $q=1$, the exact value of the posterior mean can be calculated. When $q=3$, instead of evaluating all possible terms to get \eqref{eq:e901}, we randomly sample $N=3000$ of them and take the associated average values. We choose a geometric prior for $J$ restricted between $5$ and $25$. The lower truncation ensures a minimum number of terms in the series expansion while an upper truncation is necessary to carry out the actual computation using a computer. For $\bm{\theta}$, we use the uniform distribution on the simplex as a default choice for the Dirichlet distribution. We obtain density estimates at $100$ grid points in the unit interval.

We compare our results with that using the Gaussian process (GP) prior in \citet{Tokdar2007} and Dirichlet mixture (DM) of normal kernels \citep{Escobar1995}. Mean absolute errors, mean squared errors (note that the theoretical results are obtained for Hellinger distance though) and computing time (in seconds) are summarized in Table \ref{table:mixture}. Standard errors (s.e.) are calculated based on $100$ Monte-Carlo replications. Comparing the performance of RSP using $q=1$ with that of $q=3$, we observe a trade-off between computation time and estimation accuracy. In terms of estimation accuracy, RSP ($q=3$) beats DM in both cases, but performs  worse than GP for the mixture density estimation. Overall, RSP ($q=1$) has the lowest computation cost due to its simple expression. It will be interesting to consider a utility function that simultaneously evaluates the performance of estimators based on time and accuracy \citep{Asm2007}.

 Note that for RSP, the computational complexity becomes exponential in $n$ given $q > 1$, and hence all terms in the posterior mean cannot be computed for larger values of $n$. In this situation, we sample and compute a manageable number of terms and estimate the total as in sample survey for finite populations. The resulting standard error for sampling can be estimated in the usual way from the computed terms, and will be often reasonable if the terms are not very unlike each other.


We also calculate pointwise credible bands ($95\%$ nominal coverage) for the mixture true density example \eqref{mixden} based on the second moment estimation. Results are given in Figure \ref{fig:den} for smoothness level $q=1,3$ and sample size $n=100$ and $500$. There is a significant improvement by using higher values of $q$. Bernstein-von Mises results provided by \citet{Rivoirard20121} may be useful in establishing frequentist coverage properties of these intervals.

\begin{table}[H]
  \caption{Density estimation results: mean squared error ($l_2$), mean absolute error ($l_1$), and computational time in seconds ($t$), using random series priors (RSP) with $q=1$ and $3$, Gaussian process (GP) and Dirichlet mixture (DM) priors. }  \label{table:mixture}
  \begin{center}
  \begin{tabular*}{1\textwidth}{@{\extracolsep{\fill}}lc|cccccccccccc}
    \hline
    & & \multicolumn{3}{c}{$n = 20$}&\multicolumn{3}{c}{$n=50$}&\multicolumn{3}{c}{$n=100$}&\multicolumn{3}{c}{$n=300$}\\  \cline{3-5} \cline{6-8} \cline{9-11} \cline{12-14}
     True density &  &$l_2$&$l_1$&$t$&$l_2$&$l_1$&$t$&$l_2$&$l_1$&$t$&$l_2$&$l_1$&$t$\\ \hline
   Mixture &  RSP ($q=1$) & .27 & .40 & .44 & .20 & .33 & .58 & .18 & .30 & .69 &.17 & .29 &1.24\\
   &  RSP ($q=3$) & .16 & .31 & 255 &.11 & .25 & 317 &.10 & .24 & 320 &.09 & .22 & 425\\
   &  GP  & .11 & .23 & 53.9  &.06 & .17 & 58.8 &.04 &.14 & 61.5 &.02 &.10 &66.9\\
   &  DM & .46 & .59 &9.6 &.28 &.44 &22.0 &.17 &.34 &33.6 &.11 &.28 &99.5\\
   &  max s.e. & .01 & .01 & - & .01 & .01 & - & .01 & .01 & - & .00 & .00 & - \\ \hline
 Beta$(0.5, 0.5)$ &  RSP ($q=1$) & .35 & .45 & .45 & .31 & .42 & .57 & .27 & .39 & .67 & .25 & .37 & 1.23\\
   &  RSP ($q=3$) & .16 & .27 & 267 & .15 & .27 & 314 & .14 & .25 & 324 & .11 & .22 & 428\\
   &  GP  & .34 & .39 & 55.7 &  .27 & .34 & 61.6 & .24 & .31 & 60.9  & .19 & .26 & 74.9\\
    & DM &  .31 & .38 & 7.93 & .32 & .36 & 18.5 & .27 & .29 & 48.2 & .25 & .29 & 116\\
    &  max s.e. & .01 & .01 & - & .01 & .01 & - & .01 & .01 & - & .00 & .00 & - \\ \hline
     \end{tabular*}
  \end{center}
\end{table}

\begin{figure}[!htb]
\minipage{0.49\textwidth}
  \includegraphics[width=\linewidth]{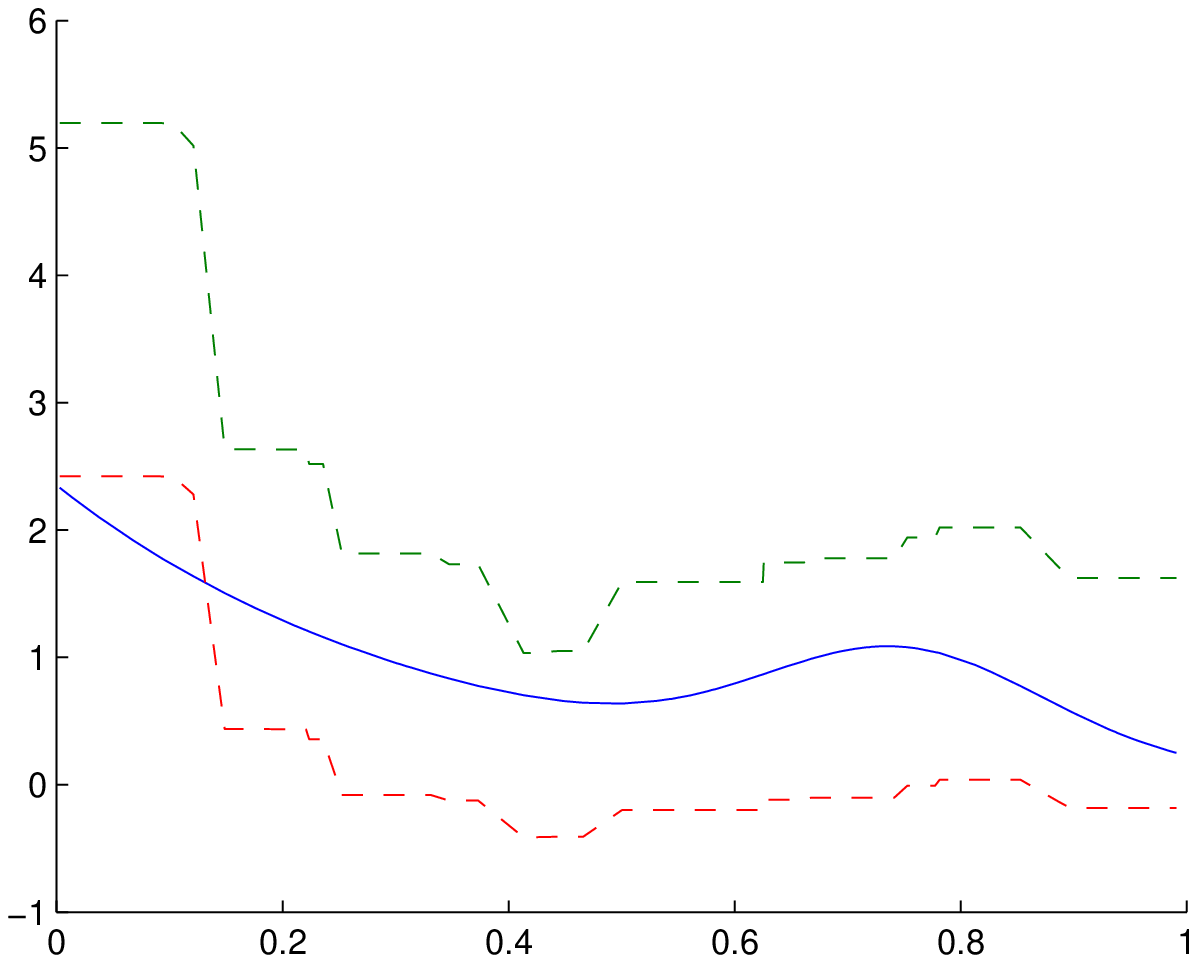}
  \endminipage\hfill
\minipage{0.49\textwidth}
  \includegraphics[width=\linewidth]{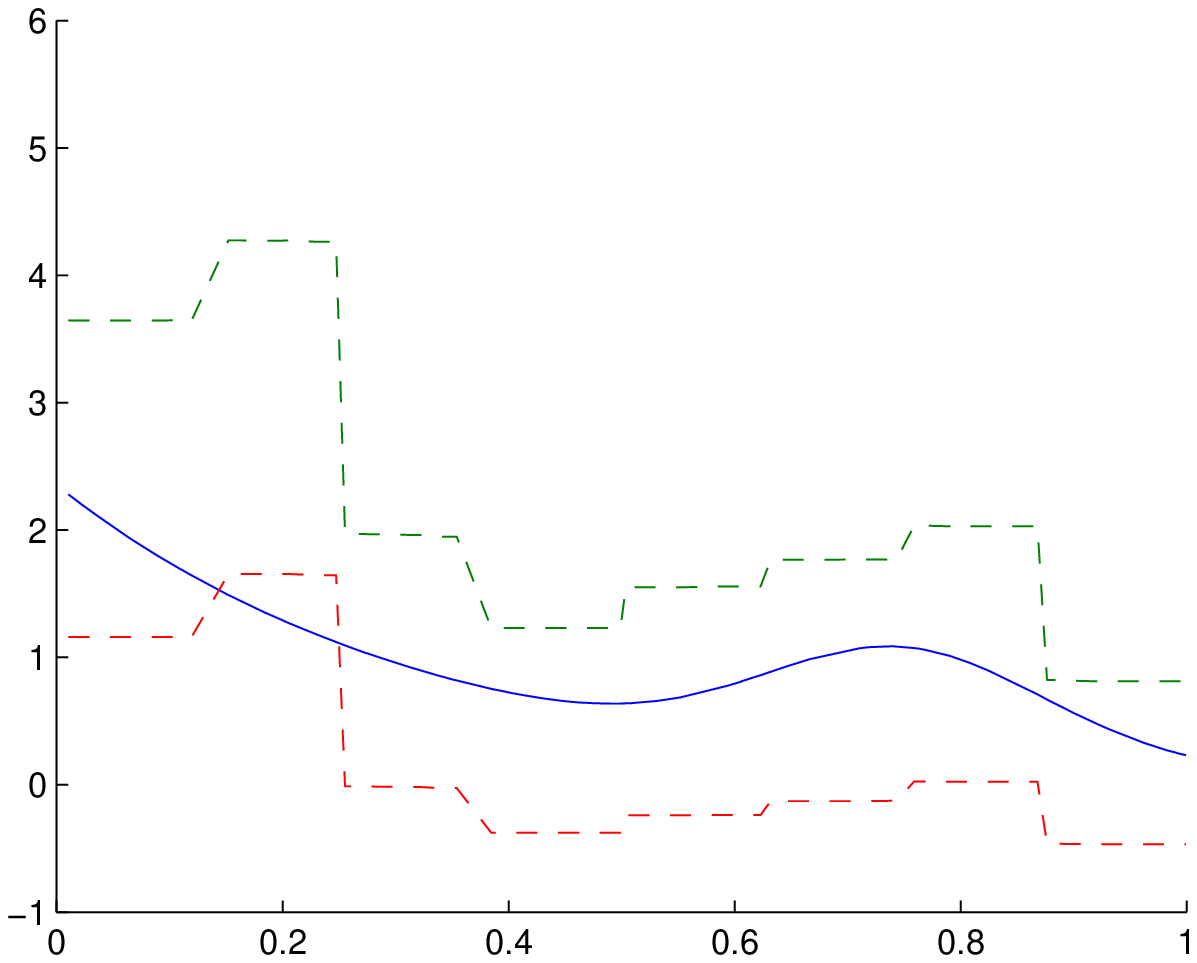}
\endminipage\hfill \\
\minipage{0.49\textwidth}
  \includegraphics[width=\linewidth]{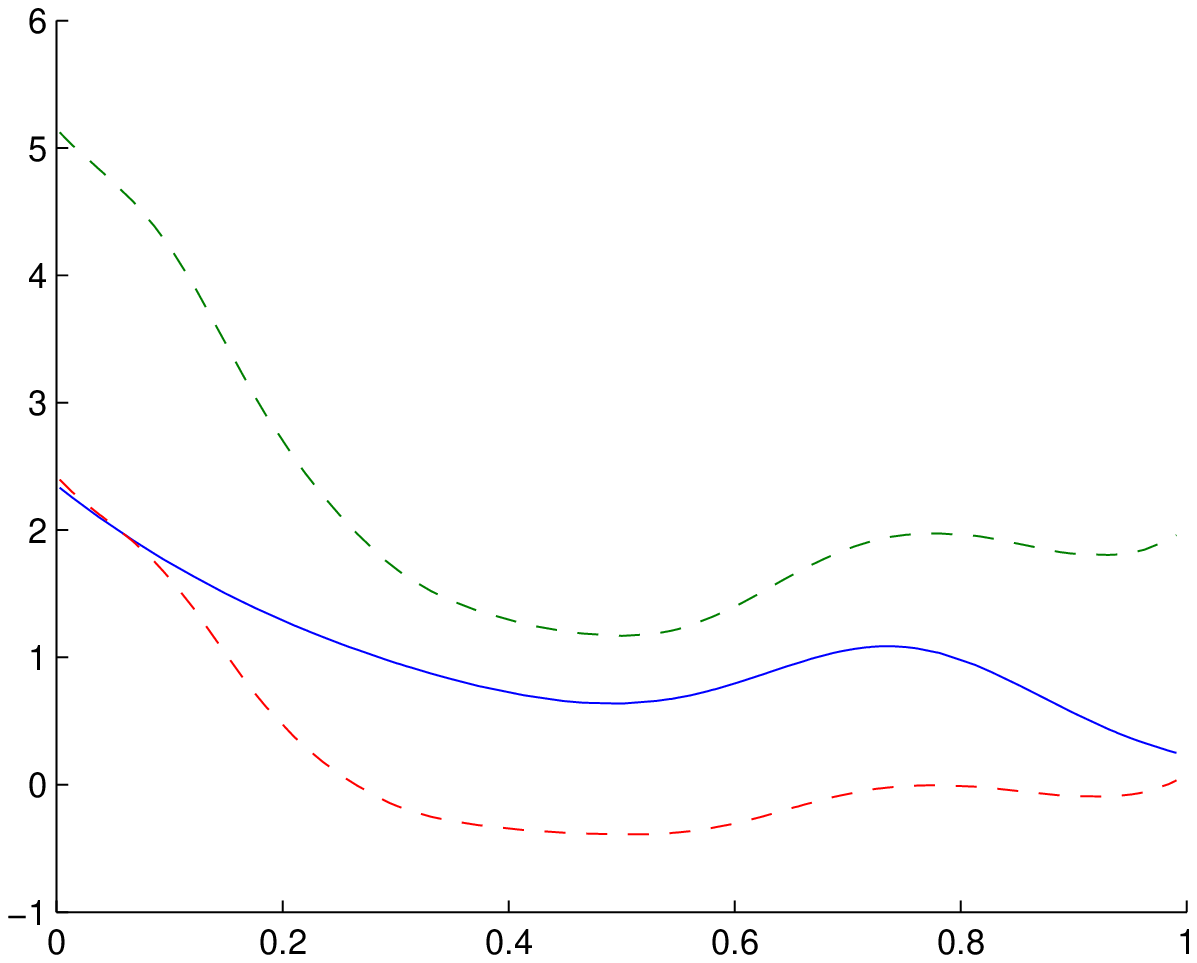}
  \endminipage\hfill
\minipage{0.49\textwidth}
  \includegraphics[width=\linewidth]{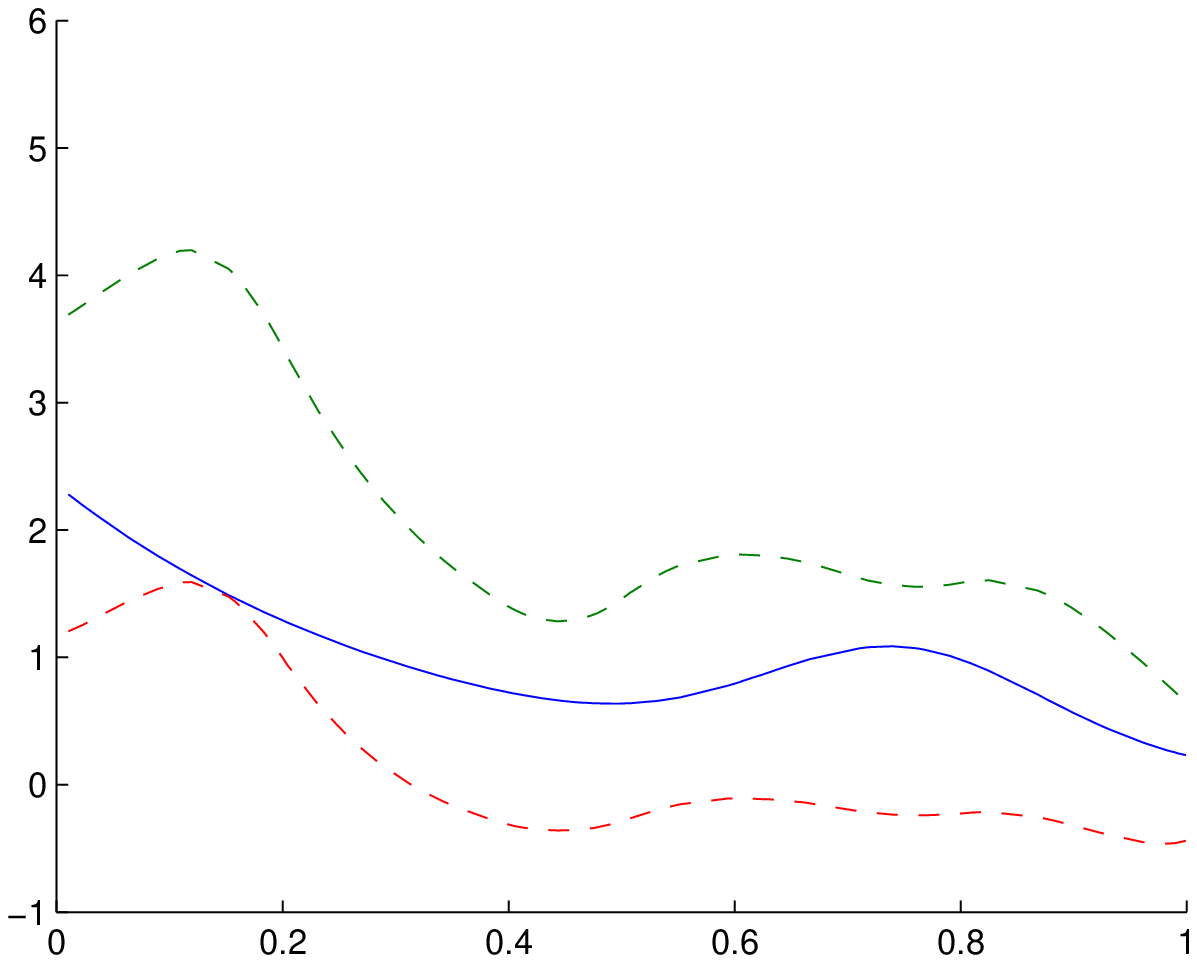}
\endminipage\hfill
\caption{Confidence bands for density estimation example \eqref{mixden}. Dashed lines: confidence bands; solid line: true density. Upper left: $q=1$, $n=100$; upper right: $q=1$, $n=500$; lower left: $q=3$, $n=100$; lower right: $q=3$, $n=500$.}\label{fig:den}
\end{figure}

\subsection{Real data example}
Next, we present a real data analysis of a functional linear model.
\begin{example} \rm\label{exreal1} ~  The Tecator data (http://lib.stat.cmu.edu/datasets/tecator) provides an example of functional data where the spectra of meat samples are observed. The objective is to identify important chemical components and predict the fat content. The data consists of $172$ training and $43$ testing samples, where each sample contains $100$ channel spectrum of absorbents.

We consider a functional linear model in \eqref{funcscalar} and use a B-spline basis expansion of $\beta(t)$. A prior is assigned by putting a Zellner's g-prior on the coefficients, a geometric distribution on $J$ truncated between $5$ and $15$ and an inverse gamma distribution $\text{IG}(a,b)$ on $\sigma^2$. We let the values of hyperparameters $g,a,b$ range from $1$ to $100$ and the posterior results are quite insensitive. The MCMC-free calculation yields a root mean squared error (RMSE) of prediction $2.64$ for $q=1$ and RMSE$=2.49$ for $q=3$, which are generally better than the regression model results (RMSE $\geq 4$) built based on principal component analysis.
\end{example}

\appendix

\section*{Appendix: B-splines}
Here we provide a brief introduction to B-splines; more details are given in \citet{Deboor2001}. Let the unit interval $[0,1]$ be divided into $K$ equally spaced subintervals. Splines are continuous, piecewise polynomials of degree at most $q$, $(q-2)$ times continuously differentiable and form a $J=q+K-1$ dimensional linear space. B-splines provide a convenient basis for this space. B-splines are always nonnegative, add up to one and each basis function is supported on an interval of length at most $q/K$.

Define the scaled B-spline basis functions $B_j^* = B_j/\int_0^1 B_j$, $j=1,\ldots,J$, so that $\int_0^1 B_j^*(z)dz=1$, $j=1,\ldots,J$.
Denote the column vector of B-spline basis functions by $\bm{B}$ and that of the normalized B-spline basis functions by $\bm{B}^*$.
The following results show some useful approximation properties of (tensor-product) B-splines.

\begin{lemma}\label{lemma:l00}
\begin{itemize}
\item [(a)] For any function $f \in \mathcal{C}^{\alpha}(0,1)$, $0 < \alpha \leq q$, there exists $\bm{\theta}\in \mathbb{R}^J$ and a constant $C > 0$ that depends only on $q$ such that
$
\|f - \bm{\theta}^T \bm{B} \|_{\infty} \leq C J^{-\alpha} \|f^{(\alpha)}\|_{\infty}.
$
\item [(b)] Further, if $f > 0$ we can choose every element of $\bm{\theta}$ to be positive.
\item [(c)] If $0 < f < 1$, we can choose every element of $\bm{\theta}$ to be between $0$ and $1$.
\item [(d)] Define $B_j^* = B_j/\int_0^1 B_j(z)dz$ for $j=1,\ldots,J$, and $\bm{B}^*$ as the column vector $(B_1^*,\ldots,B_J^*)$. If $f$ is a density function, then there exists $\bm{\theta}\in \Delta_J$ and a constant $C > 0$ such that
$
\|f - \bm{\theta}^T \bm{B}^* \|_{\infty} \leq C J^{-\alpha} \|f^{(\alpha)}\|_{\infty}.
$
\end{itemize}
\end{lemma}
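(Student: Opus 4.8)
The plan is to base all four parts on a single linear quasi-interpolation operator $Qf=\sum_{j=1}^{J}\lambda_j(f)B_j$ built from the de Boor--Fix functionals on the uniform knot sequence, exploiting three standard properties: (i) $Q$ reproduces every polynomial of degree $<q$; (ii) each $\lambda_j$ is local, $|\lambda_j(g)|\le C_q\|g\|_{L_\infty(I_j)}$, where $I_j$ is the support of $B_j$ and $C_q$ depends only on $q$; and (iii) $\lambda_j(1)=1$. For part (a) I would expand $f$ on each knot interval by its Taylor polynomial of degree $\alpha_0$; since $Q$ reproduces that polynomial and the local remainder is $O(h^{\alpha})\|f^{(\alpha)}\|_\infty$ with $h\asymp 1/K\asymp 1/J$ the mesh width, the locality bound (ii) yields $\|f-Qf\|_\infty\le C_q h^{\alpha}\|f^{(\alpha)}\|_\infty\lesssim J^{-\alpha}\|f^{(\alpha)}\|_\infty$, with the constant depending only on $q$. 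This is the classical B-spline estimate, and I would either cite \citet{Deboor2001} or reproduce the short local argument.

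For parts (b) and (c) the crucial observation is that the coefficients of this same operator are quantitatively close to values of $f$. Applying (iii) and the locality bound (ii) with $g=f-f(\xi_j)$ for a point $\xi_j\in I_j$ gives $|\lambda_j(f)-f(\xi_j)|\le C_q\,\omega_f(h)$, where $\omega_f$ is the modulus of continuity of $f$; since $f\in\mathcal{C}^{\alpha}(0,1)$ is uniformly continuous, $\omega_f(h)\to 0$ as the mesh refines. If $f$ is bounded below by $m>0$, then $\lambda_j(f)\ge m-C_q\omega_f(h)>0$ once $h$ is small, giving positive coefficients; if moreover $f<1$, then $\lambda_j(f)\le \max f+C_q\omega_f(h)<1$, giving $\theta_j\in(0,1)$. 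The same $\bm{\theta}$ still satisfies the bound of (a), so (b) and (c) hold simultaneously for a sufficiently fine mesh.

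For part (d) I would pass to the normalized basis by setting $c_j=\theta_j w_j$ with $w_j=\int_0^1 B_j$, so that $\bm{\theta}^T\bm{B}=\sum_j c_j B_j^*$ and, by (b), $c_j\ge 0$. Because $\int_0^1 B_j^*=1$, the total mass is $\sum_j c_j=\int_0^1 Qf=1+O(\|f-Qf\|_\infty)=1+O(J^{-\alpha})$, so the vector is nonnegative but not yet on the simplex; renormalizing $\tilde c_j=c_j/\sum_i c_i$ places it in $\Delta_J$, and the additional error $\|Qf\|_\infty\,|1-1/\sum_i c_i|=O(J^{-\alpha}\|f^{(\alpha)}\|_\infty)$ is absorbed into the bound, giving $\|f-\tilde{\bm{c}}^T\bm{B}^*\|_\infty\lesssim J^{-\alpha}\|f^{(\alpha)}\|_\infty$.

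I expect the main obstacle to be reconciling the optimal approximation order with the sign and range constraints in (b)--(d): classical high-order quasi-interpolants do not preserve positivity, while shape-preserving operators such as Schoenberg's variation-diminishing spline lose accuracy beyond order two. The quantitative ``coefficient $\approx$ local value'' estimate is what resolves this tension, but it forces (b)--(d) to hold only for a sufficiently fine mesh (large $J$), and for (d) it tacitly uses that the density is bounded away from zero---precisely the setting of the applications, where $w_0\in[\underline{M},\overline{M}]$. Handling a density that is allowed to vanish while still guaranteeing nonnegative coefficients at full order $J^{-\alpha}$ would require a more delicate positivity-preserving quasi-interpolant, and I would flag this as the genuinely delicate point.
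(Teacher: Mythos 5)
Your proposal is correct and follows essentially the same route as the paper: part (a) is the classical local quasi-interpolant estimate, parts (b)--(c) rest on exactly the paper's key observation (via the locality of the de Boor dual functionals) that the coefficients track local values of $f$ and hence inherit its sign and range constraints once $J$ is large, and part (d) is the same rescale-to-$\bm{B}^*$-and-renormalize argument with the $O(J^{-\alpha})$ mass defect absorbed into the error bound. The caveats you flag --- that (b)--(d) hold only for sufficiently large $J$ and that positivity effectively requires $f$ bounded away from zero --- are implicitly present in the paper's proof as well (it begins by choosing $\epsilon>0$ with $f\geq\epsilon$).
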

\begin{remark}\rm
In part (b), the condition $f > 0$ is crucial. If we approximate a nonnegative function $f$ using nonnegative coefficients $\bm{\theta}$, then the approximation error is only $O(J^{-1})$ [cf. \citep{Deboor1974}], which does not adapt to smoothness levels beyond $1$.
\end{remark}

\begin{proof}[Proof of Lemma 1]
The first part is a well-known spline approximation result, e.g., Theorem 6.10 in \citet{Schumaker2007}.

For the second assertion, find $\epsilon >0$ such that $f \geq \epsilon$. Using Corollaries 4 and 6 in Chapter 11 of \citet{Deboor2001}, for each $\theta_j$, there exists a universal constant $C_1$ that depends only on $q$, such that $|\theta_j - c| \leq C_1 \sup_{x \in [t_{j+1},t_{j+q-1}]} |f(z)-c |$ for any choice of the constant $c$; here $t_{j+1}$ and $t_{j+q-1}$ are $(j+1)$th and $(j+q-1)$th knots. Choose $c = \inf_{z \in [t_{j+1},t_{j+q-1}]} f(z) \geq \epsilon$, and note that the infimum is attained somewhere in $[t_{j+1},t_{j+q-1}]$, say at $t^*$. By the smoothness condition on $f$, we have $\sup_{z \in [t_{j+1},t_{j+q-1}]} |f(z)-c | \leq C_2 |z-t^*|^{\min(\alpha,1)} \leq C_2 (q/J)^{\min(\alpha,1)}$ for some constant $C_2 > 0$. Choosing $J > q (C_1 C_2/\epsilon)^{\max(1/\alpha,1)}$, we have $\theta_j > c - C_1 (q/J)^{\min(\alpha,1)} \geq 0$.

Part (c) is a consequence of (b) by considering $1-f > 0$.

For part (d), by (b), we know there exists a $\bm{\eta}_1 \in (0,\infty)^J$ such that $\|f - \bm{\eta}_1^T \bm{B}\| \lesssim J^{-\alpha} $. Define $\eta_{2,i}=\eta_{1,j} \int_0^1 B_j(z)dz$ for $j=1,\ldots, J$. Then $\|f - \bm{\eta}_2^T \bm{B^*}\|_{\infty} \lesssim J^{-\alpha} $, and in particular $\|\bm{\eta}_2^T \bm{B}\|$ is bounded. By integration, we have $|1-\|\bm{\eta}_2\|_1|=|1 - \sum_{j=1}^J \eta_{2,j}| \lesssim J^{-\alpha}$. Choose $\bm{\theta} = \bm{\eta}_2/\|\bm{\eta}_2\|_1 \in \Delta_J$. Note that $\|f - \bm{\theta}^T \bm{B}^*\|_{\infty} \leq \|f - \bm{\eta}_2^T \bm{B}^*\|_{\infty} + \|\bm{\eta}_2^T \bm{B}^*\|_{\infty}|1-\|\bm{\eta}_2\|_1^{-1}|  \lesssim J^{-\alpha}$.
\end{proof}

\begin{lemma}\label{lemma:l00*}
Let $B_{j_1\cdots j_s}(z_1,\ldots,z_s)=\prod_{k=1}^s B_{j_k}(z_k)$, $1\le j_k\le J_k$, $k=1,\ldots,s$, be the tensor products of B-splines formed by univariate B-splines in $J_k$-dimensional space of splines, $k=1,\ldots,s$ respectively.
\begin{itemize}
\item [(a)] For any function $f \in \mathcal{C}^{\bm{\alpha}}(0,1)^s$, the anisotropic H\"older class defined in Section~3.2, where $\alpha_1,\ldots,\alpha_s$ are positive integers less than or equal to $q$, there exists $\bm{\theta}=(\theta_{j_1\cdots j_s}: 1\le j_k\le J_k, k=1,\ldots,s)\in \mathbb{R}^{\prod_{k=1}^s J_k}$ and a constant $C > 0$ that depends only on $q$ such that
$$
\|f - \bm{\theta}^T \bm{B} \|_{\infty} \leq C \sum_{k=1}^s J_k^{-\alpha_k} \left\|\frac {\partial^{\alpha_k}f}{\partial z_k^{\alpha_k}}\right\|_{\infty}.
$$
\item [(b)] Further, if $f > 0$ we can choose every component of $\bm{\theta}$ to be positive.
\item [(c)] If $0 < f < 1$, we can choose every element of $\bm{\theta}$ to be between $0$ and $1$.
\item [(d)] Define $B_{j_1\cdots j_s}^* = B_{j_1\cdots j_s}/\int_0^1 B_{j_1\cdots j_s}(z_1,\ldots,z_s)dz_1\cdots dz_s$, for $j_k=1,\ldots,J_k$, $k=1,\ldots,s$, and $\bm{B}^*$ as the column vector formed by the collection $B_{j_1\cdots j_s}^*$. If $f$ is a density function, then there exists $\bm{\theta}\in \Delta_{\prod_{k=1}^s J_k}$ and a constant $C > 0$ such that the same approximation order is maintained.
\end{itemize}
\end{lemma}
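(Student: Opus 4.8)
The plan is to lift the univariate Lemma~\ref{lemma:l00} to the tensor-product setting one part at a time, since each assertion of Lemma~\ref{lemma:l00*} has a direct one-dimensional counterpart that has already been proved, and the tensor structure of both the basis and the associated coefficient functionals lets the univariate estimates be applied coordinatewise.

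For part (a), I would invoke the standard anisotropic tensor-product spline approximation bound, Theorem~12.7 of \citet{Schumaker2007}, which is already cited for exactly this purpose in Subsection~\ref{mulden}. To keep the argument self-contained one can instead build the approximation from univariate quasi-interpolants $Q_k$ acting in the $k$-th coordinate, each attaining the rate $J_k^{-\alpha_k}$ and bounded in $\|\cdot\|_\infty$ by a constant depending only on $q$. Writing $Q=Q_1\cdots Q_s$ and telescoping,
\[
f-Qf=\sum_{k=1}^s Q_1\cdots Q_{k-1}(I-Q_k)f,
\]
each summand is controlled by the univariate error of $Q_k$ in the $k$-th direction times the uniformly bounded norms of the preceding operators; since $s$ is fixed this yields $C\sum_{k=1}^s J_k^{-\alpha_k}\|\partial^{\alpha_k}f/\partial z_k^{\alpha_k}\|_\infty$.

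Part (b) is where I expect the main obstacle. The idea is to extend the univariate local coefficient bound $|\theta_j-c|\le C_1\sup|f-c|$ (Corollaries 4 and 6 in Chapter~11 of \citet{Deboor2001}, as used for Lemma~\ref{lemma:l00}(b)) to the tensor-product coefficients. Representing $\theta_{j_1\cdots j_s}=\lambda_{j_1\cdots j_s}(f)$ through the tensor-product coefficient functional $\lambda_{j_1\cdots j_s}=\lambda_{j_1}\otimes\cdots\otimes\lambda_{j_s}$, which reproduces constants, is supported on a box $R_{j_1\cdots j_s}$ with side lengths at most $q/J_k$, and has norm at most $C_1^s$, one obtains $|\theta_{j_1\cdots j_s}-c|\le C_1^s\sup_{R_{j_1\cdots j_s}}|f-c|$ for every constant $c$. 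Choosing $c=\inf_{R_{j_1\cdots j_s}}f\ge\epsilon$ (using $f\ge\epsilon>0$) and bounding $\sup_{R}|f-c|$ by the oscillation of $f$ over $R_{j_1\cdots j_s}$, which tends to $0$ as the $J_k$ grow by the continuity of $f$, forces $\theta_{j_1\cdots j_s}\ge\epsilon-o(1)>0$ once every $J_k$ is large. The delicate point is precisely the tensor product of the dual functionals and the uniformity of $C_1^s$ in the number of knots; everything else is routine. Part (c) then follows exactly as in the univariate case by applying (b) to $1-f>0$ and using that tensor products of B-splines form a partition of unity, $\sum B_{j_1\cdots j_s}=\prod_k\bigl(\sum_{j_k}B_{j_k}\bigr)=1$.

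Finally, for part (d) I would repeat the rescaling argument of Lemma~\ref{lemma:l00}(d) essentially verbatim. By (b), choose positive $\bm{\eta}_1$ with $\|f-\bm{\eta}_1^T\bm{B}\|_\infty\lesssim\sum_k J_k^{-\alpha_k}$, and set the rescaled coefficients $\eta_{2,j_1\cdots j_s}=\eta_{1,j_1\cdots j_s}\int B_{j_1\cdots j_s}$ so that $\bm{\eta}_2^T\bm{B}^*=\bm{\eta}_1^T\bm{B}$ and $\|\bm{\eta}_2^T\bm{B}^*\|_\infty$ remains bounded. Integrating the $L_\infty$-error over the unit cube $(0,1)^s$, which has volume one, and using $\int f=1$ together with $\int B_{j_1\cdots j_s}^*=1$ gives $|1-\|\bm{\eta}_2\|_1|\lesssim\sum_k J_k^{-\alpha_k}$. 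Setting $\bm{\theta}=\bm{\eta}_2/\|\bm{\eta}_2\|_1\in\Delta_{\prod_k J_k}$, the extra normalization error $\|\bm{\eta}_2^T\bm{B}^*\|_\infty\,|1-\|\bm{\eta}_2\|_1^{-1}|$ is of the same order, so the approximation rate of part (a) is preserved.
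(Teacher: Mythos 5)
Your proposal is correct and follows essentially the same route as the paper: part (a) rests on Theorem 12.7 of \citet{Schumaker2007}, part (b) bounds the tensor-product coefficients via tensor products of uniformly bounded univariate dual functionals (the paper justifies the uniform bound you flag as delicate by citing Theorem~4.41 of \citet{Schumaker2007}) combined with the oscillation argument from Lemma~\ref{lemma:l00}(b), and parts (c) and (d) repeat the univariate arguments verbatim, exactly as in the paper. Your optional quasi-interpolant telescoping argument for (a) is a sound self-contained alternative to the citation, but does not change the substance of the proof.
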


\begin{proof}
The first assertion is established in Theorem 12.7 in \citet{Schumaker2007}.

Proof of the second assertion proceeds as in the corresponding part of Lemma 1 using the parallel properties of tensor products of B-splines. The only relation we need to verify is $|\theta_{j_1\cdots j_s} - c| \leq C_1 \max_k \sup_{z_k \in [t_{i+1,k},t_{i+q-1,k}]} |f(z)-c |$ for any choice of the constant $c$; here $t_{i+1,k}$ and $t_{i+q-1,k}$ are $(i+1)$th and $(i+q-1)$th knots on the $k$th co-ordinate, $k=1,\ldots,s$. As in the univariate case, because the sum of all multivariate B-splines is one, to establish the relation we need to bound absolute values of the coefficients using the values of the target function. Clearly a dual basis for the multivariate B-splines is formed by tensor products of univariate dual bases and these can be chosen to be uniformly bounded; see Theorem~4.41 of \citet{Schumaker2007}. Using such a dual basis, the maximum value of coefficients of spline approximations is bounded by a constant multiple of the $L_\infty$-norm of the target function. This gives the desired bound.

Parts (c) and (d) are established following exactly the same arguments used in the respective parts in the Proof of Lemma 1.
\end{proof}

\begin{remark}\rm
In the isotropic case $\bm{\alpha}=(\alpha,\ldots,\alpha)$, the value of $\alpha$ need not be restricted to integers only --- any $\alpha\le q$ can be treated. This is because in this case the approximation error $\|f-\bm{\theta}^T \bm{B}\|_\infty$ for the best multivariate spline approximation for $f\in \mathcal{C}^\alpha (0,1)^s$ with $J$ terms in each direction decays at the rate $J^{-\alpha}$ for any positive $\alpha\le q$.
\end{remark}

\begin{remark}\rm
In part (b), the condition $f > 0$ is crucial. If we approximate a nonnegative function $f$ using nonnegative coefficients $\bm{\theta}$, then the approximation error is only $O(J^{-1})$ \citep{Deboor1974}, which does not adapt to smoothness levels beyond $1$.
\end{remark}

\bibliographystyle{plainnat}
\bibliography{ref}

\end{document}